\def\NZQ{\Bbb}               
\def\NN{{\NZQ N}}
\def\ZZ{{\NZQ Z}}
\def\frk{\frak}               
\def\pp{{\frk p}}
\def\B'c{{\mathcal{B'}}}
\def\U'c{{\mathcal{U'}}}
\def\opn#1#2{\def#1{\operatorname{#2}}} 
\opn\chara{char}
\opn\length{\ell}
\opn\cd{cd}
\opn\projdim{proj\,dim}
\opn\injdim{inj\,dim}
\opn\ini{in}
\opn\rank{rank}
\opn\depth{depth}
\opn\height{ht}
\opn\bigheight{bight}
\opn\embdim{emb\,dim}
\opn\codim{codim}
\opn\Tr{Tr}
\opn\bigrank{big\,rank}
\opn\superheight{superheight}\opn\lcm{lcm}
\opn\trdeg{tr\,deg}%
\opn\reg{reg}
\opn\lreg{lreg}
\opn\set{set}
\opn\supp{Supp}
\opn\shad{Shad}
\opn\indeg{indeg}
\opn\lex{lex}
\opn\div{div}
\opn\Div{Div}
\opn\cl{cl}
\opn\Cl{Cl}
\opn\Spec{Spec}
\opn\Supp{Supp}
\opn\supp{supp}
\opn\Sing{Sing}
\opn\Ass{Ass}
\opn\Ann{Ann}
\opn\Rad{Rad}
\opn\Soc{Soc}
\opn\Ker{Ker}
\opn\Coker{Coker}
\opn\Im{Im}
\opn\Hom{Hom}
\opn\Tor{Tor}
\opn\Ext{Ext}
\opn\End{End}
\opn\Aut{Aut}
\opn\id{id}
\opn\nat{nat}
\opn\GL{GL}
\opn\SL{SL}
\opn\mod{mod}
\opn\ord{ord}
\opn\ara{ara}
\opn\aff{aff}
\opn\con{conv}
\opn\relint{relint}
\opn\st{st}
\opn\lk{lk}
\opn\cn{cn}
\opn\core{core}
\opn\vol{vol}
\opn\gr{gr}
\def\pot#1#2{#1[\kern-0.28ex[#2]\kern-0.28ex]}
\opn\dirlim{\underrightarrow{\lim}}
\opn\invlim{\underleftarrow{\lim}}
\def\pnt{{\raise0.5mm\hbox{\large\bf.}}}
\def\Implies{\ifmmode\Longrightarrow \else
     \unskip${}\Longrightarrow{}$\ignorespaces\fi}
\def\implies{\ifmmode\Rightarrow \else
     \unskip${}\Rightarrow{}$\ignorespaces\fi}
\def\iff{\ifmmode\Longleftrightarrow \else
     \unskip${}\Longleftrightarrow{}$\ignorespaces\fi}
\newtheorem{Theorem}{Theorem}[section]
\newtheorem{Lemma}[Theorem]{Lemma}
\newtheorem{Corollary}[Theorem]{Corollary}
\newtheorem{Proposition}[Theorem]{Proposition}
\let\epsilon=\varepsilon
\let\phi=\varphi
\let\kappa=\varkappa
\title{Arithmetical rank of lexsegment edge ideals }
\author{Viviana Ene \and Oana Olteanu \and Naoki Terai}
\thanks{\footnotesize  The third author was supported by Kakenhi20540047
}
\address{Faculty of Mathematics and Computer Science, Ovidius University, Bd.\ Mamaia 124,
 900527 Constanta, Romania,} \email{vivian@univ-ovidius.ro} 
\address{Faculty of Mathematics and Computer Science, Ovidius University, Bd.\ Mamaia 124,
 900527 Constanta, Romania,} \email{olteanuoanastefania@gmail.com} 
\address{Department of Mathematics, Faculty of Culture and Education, Saga University, Saga 840-88502, Japan,}
 \email{terai@cc.saga-u.ac.jp}
\begin{document}

\maketitle

\begin{abstract} Let $I\subset S=K[x_1,\ldots,x_n]$ be a lexsegment edge ideal or the Alexander dual of such an ideal. 
 In both cases it turns out that 
the arithmetical rank of $I$ is equal to the projective dimension of $S/I.$ \\

Keywords: arithmetical rank, projective dimension, regularity,  edge ideals, squarefree lexsegment ideals, Alexander dual.\\ 

MSC: 13F55, 13A15, 55U10.

\end{abstract}

\section*{Introduction}

Let $S=K[x_1,\ldots,x_n]$ be the polynomial ring in $n$ variables over a field $K$. Let $I\subset S$ be a homogeneous ideal and 
$\sqrt{I}$ its radical. The \textit{arithmetical rank} of $I$ is defined as
\[
\ara(I)=\min\{r\in \NN\colon \text{ there exist }a_1,\ldots,a_r\in I \text{ such that }\sqrt{I}=\sqrt{(a_1,\ldots,a_r)}\}.
\]

Geometrically, $\ara(I)$ is the smallest number of hypersurfaces whose intersection is set-theoretically equal to the algebraic set defined 
by $I$, if $K$ is algebraically closed.

For a squarefree monomial ideal $I\subset S$ the following upper bound of $\ara(I)$ is known \cite{Gr}. Namely,
\[
\ara(I)\leq n-\indeg(I)+1,
\]
where $\indeg(I)$ is the \textit{initial degree} of $I,$ that is, $\indeg(I)=\min\{q\colon I_q\neq 0\}.$

Let $\cd(I)=\max\{i\in \ZZ\colon H_I^i(S)\neq 0\},$ where $H_I^i(S)$ denotes the $i$-th local cohomology module of $S$ with 
support at $V(I).$ The number $\cd(I)$ is called the \textit{cohomological dimension} of $I.$ By expressing the local cohomology modules in terms of Cech complex, one can see that $\ara(I)$ is  bounded below
by $\cd(I).$ For a squarefree monomial ideal $I$ of $S$, it is known that $\cd(I)=\projdim_S(S/I)$ (see \cite{L}). From these inequalities we 
get 
\begin{equation}\label{equstar}
\projdim_S(S/I)=\cd (I)\leq \ara(I)\leq n-\indeg(I)+1,
\end{equation} 
for any squarefree monomial ideal $I\subset S.$

There are many instances when the equality $\projdim_S(S/I)=\ara(I)$ holds. We refer the reader to \cite{B2}, \cite{BT}, \cite{BT1}, 
\cite{K}, \cite{KTY}, \cite{KTY1}, \cite{Ku}, \cite{M}, \cite{SV}, \cite{ScV} for classes of ideals $I\subset S$ whose arithmetical rank is equal to the projective dimension of $S/I$.

We show in Section \ref{aralexideals} that the  equality also holds for a lexsegment edge ideal. By a \textit{lexsegment edge ideal} we 
mean a squarefree monomial ideal generated in degree two by a lexsegment set, that is, a set of the form 
$$L(u,v)=\{w\colon w \text{ is a squarefree monomial of degree }2, \ u\geq_{\lex}w \geq_{\lex} v\},$$ where $u\geq_{\lex} v$ are two 
squarefree monomials of degree $2$ in $S.$ In order to prove the equality $\ara(I)=\projdim_S(S/I)$ for any lexsegment edge ideal we 
need first to compute  some invariants of these classes of ideals. We make these computations in Section \ref{invariants}. Having the formulas for dimension and depth, we recover the characterization of the Cohen-Macaulay lexsegment edge ideals from \cite{BS2}. Moreover, since by Theorem \ref{aralei}, $\ara(I)=\projdim_S(S/I)$ for any lexsegment edge ideal $I$, it turns out that any Cohen-Macaulay lexsegment edge ideal is a set-theoretically complete intersection too. In the last 
section we show 
that, given a lexsegment edge ideal $I,$ we have  the equality 
$\ara(I^{\ast})=\projdim_S(S/I^{\ast})$ for its Alexander dual $I^{\ast}$ as well.

\section{Invariants of lexsegment edge ideals}
\label{invariants}

Given a lexsegment edge ideal $I,$ we are going to determine $\dim(S/I),$ $\depth(S/I),$ and $\reg(I).$
Let $u=x_1x_i, v=x_jx_k, j<k,$ be two squarefree monomials of degree $2$ such that $u\geq_{\lex} v$ and $I=(L(u,v))$ the lexsegment edge 
ideal generated by the set $L(u,v)$. 

We always assume that the set $L(u,v)$ contains at least two elements, that is, $u>_{\lex} v$. 

Moreover, for our study, we may consider that $x_1|u$. Indeed, if $u=x_lx_q$ for some $l\geq 2$, then $x_1,\ldots,x_{l-1}$ is a regular sequence on $S/I$, and we may reduce to the computation of all the invariants in the ring of polynomials in the variables $x_l,\ldots,x_n$. 

We first recall the well-known fact that if $u=x_1x_2$ and $v=x_{n-1}x_n,$ that is $I$ is equal to the ideal $I_{n,2}$ generated 
by all the squarefree monomials of degree two in $n$ variables, then we have $\dim(S/I)=\depth(S/I)=1$ and $I$ has a linear resolution, 
that is, $\reg(I)=2.$ Therefore, further on, we always consider that $I\neq I_{n,2}.$ Moreover, we notice that one may also assume that 
$j\geq 2.$ Indeed, if $j=1,$ that is, $v=x_1x_k$ for some $k\geq i,$ then all the invariants can be easily computed.  

We begin our study with the computation of $\dim(S/I).$ 

If $I$ is an initial lexsegment edge ideal, that is, $I$ is generated by a lexsegment set $L^i(v)=L(u,v)$ where
 $u=x_1x_2,$ then, by \cite[Proposition 1.1]{AHH}, we get $\dim(S/I)=n-j.$ In 
the next lemma we compute $\dim(S/I)$ for a final lexsegment edge ideal, that is, generated by a lexsegment set $L^f(u)=L(u,v)$ 
where $v=x_{n-1}x_n.$

\begin{Lemma}
Let $I=(L^f(u)),$ where $u=x_1x_i, i\geq 3.$ Then $\dim(S/I)=2.$
\end{Lemma}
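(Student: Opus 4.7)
The plan is to translate the question into graph theory by viewing $I$ as the edge ideal of a graph $G$ on vertex set $[n]$, since for any squarefree monomial ideal generated in degree $2$ we have $\dim(S/I)=\alpha(G)$, the independence number of $G$.

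First I would list the edges of $G$ explicitly. Since $L^f(u)=L(x_1x_i,x_{n-1}x_n)$ and $x_{n-1}x_n$ is the $\lex$-smallest squarefree quadratic monomial, a squarefree monomial $x_ax_b$ with $a<b$ lies in $L^f(u)$ precisely when $x_1x_i\geq_{\lex}x_ax_b$. This happens exactly when either $a\geq 2$ (any such monomial), or $a=1$ and $b\geq i$. Therefore the edges of $G$ are
\[
\{1,b\}\text{ for }i\leq b\leq n,\qquad \{a,b\}\text{ for }2\leq a<b\leq n.
\]

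Next I would analyze independent sets. The second family of edges shows that $\{2,3,\ldots,n\}$ is a clique in $G$, so any independent set contains at most one vertex from $\{2,\ldots,n\}$. If an independent set $W$ contains the vertex $1$, then since $1$ is adjacent to every vertex in $\{i,\ldots,n\}$, the extra vertex of $W$ in $\{2,\ldots,n\}$ must come from $\{2,\ldots,i-1\}$. Because $i\geq 3$, the set $\{2,\ldots,i-1\}$ is non-empty, so $\{1,2\}$ is an independent set and $\alpha(G)\geq 2$. On the other hand, $\alpha(G)\leq 2$: an independent set containing $1$ has at most one other vertex (from $\{2,\ldots,i-1\}$), while an independent set not containing $1$ sits inside the clique $\{2,\ldots,n\}$ and hence has size at most $1$. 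Combining the two bounds, $\alpha(G)=2$, which gives $\dim(S/I)=2$.

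There is essentially no hard step here; the only thing one must be careful about is the correct description of the lexsegment in terms of edges, after which the independence number is immediate from the clique structure on $\{2,\ldots,n\}$.
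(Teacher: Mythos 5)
Your proof is correct and is essentially the paper's argument translated into the dual language of independence numbers: your observation that $\{2,\dots,n\}$ is a clique is exactly the paper's observation that $I$ contains all squarefree quadrics in $x_2,\dots,x_n$ (giving $\height(I)\geq n-2$), and your independent set $\{1,2\}$ corresponds to the paper's witness prime $(x_3,\dots,x_n)\supseteq I$ (giving $\height(I)\leq n-2$).
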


\begin{proof}
Let $\pp$ be a minimal prime ideal of $I.$ Then $\pp$ contains the ideal generated by all  the squarefree monomials of degree $2$ in the 
variables $x_2,\ldots,x_n$ whose height is $n-2$, hence $\height (\pp)\geq n-2,$ which implies that $\height(I)\geq n-2.$ On the other
 hand, since $(x_3,\ldots,x_n)\supset I,$ we get $\height(I)\leq n-2.$ Consequently, $\height(I)=n-2$ and $\dim(S/I)=2.$
\end{proof}

\begin{Proposition}
Let $I=(L(u,v))$ be a lexsegment edge ideal which is neither initial nor final and is determined by $u=x_1x_i$ and $v=x_jx_k.$ Then 
$\dim(S/I)=n-j.$
\end{Proposition}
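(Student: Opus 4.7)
The plan is to reinterpret the statement in graph-theoretic terms and compute the independence number by direct case analysis. Since $I$ is generated by squarefree quadratic monomials, it is the edge ideal of the graph $G$ on $\{1,\ldots,n\}$ whose edges are the pairs $\{a,b\}$ with $x_a x_b \in L(u,v)$; via the Stanley--Reisner correspondence, $\dim(S/I)$ equals the independence number $\alpha(G)$. First I would unwind $x_1 x_i \geq_{\lex} x_a x_b \geq_{\lex} x_j x_k$ (with $a<b$) into three explicit families of edges: (i) $\{1,b\}$ for $i \leq b \leq n$; (ii) $\{a,b\}$ for $2 \leq a \leq j-1$ and $a<b\leq n$ (vacuous when $j=2$); (iii) $\{j,b\}$ for $j+1 \leq b \leq k$. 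In particular, no edge has both endpoints greater than $j$, so $\{j+1,\ldots,n\}$ is an independent set of size $n-j$ and the bound $\dim(S/I) \geq n-j$ is immediate; equivalently, $(x_1,\ldots,x_j)$ is a minimal prime of $I$ of height $j$.

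For the matching upper bound I would show every independent set $W$ of $G$ satisfies $|W| \leq n-j$. The key structural observation, coming from family (ii), is that each $a \in \{2,\ldots,j-1\}$ is adjacent to \emph{every} larger vertex; hence $W$ contains at most one such $a$, and if $a_0 \in W \cap \{2,\ldots,j-1\}$ then $W \subseteq \{1,a_0\}$ and $|W| \leq 2$. Otherwise $W \cap \{2,\ldots,j-1\} = \emptyset$, so $W \subseteq \{1,j,j+1,\ldots,n\}$; I would then split on whether $1$ and $j$ belong to $W$, using families (i) and (iii) to exclude the ranges $\{i,\ldots,n\}$ and $\{j+1,\ldots,k\}$ respectively from $W$, and bound $|W|$ in each of the four resulting subcases. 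Crucially, the neither-initial-nor-final hypothesis, together with $j<k\leq n$, forces $j \leq n-2$, i.e.\ $n \geq j+2$; this slack is precisely what absorbs the small additive constants appearing in the bounds.

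The hardest subcase is $\{1,j\} \subseteq W$: non-adjacency of $1$ and $j$ already demands $j<i$, and the two forbidden ranges cut $W$ down to $\{1,j\} \cup \{k+1,\ldots,i-1\}$, yielding $|W| \leq 2 + \max(0,i-1-k)$. Verifying that this never exceeds $n-j$ is the one calculation where the inequalities $i \leq n$ and $k \geq j+1$ need to be chained together with a little care; the other subcases then reduce to inspection. Combining the two bounds gives $\dim(S/I) = n-j$.
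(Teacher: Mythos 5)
Your argument is correct, and it reaches the conclusion by a route that is formally dual to, but in practice quite different from, the paper's. The paper works on the side of minimal primes (equivalently, minimal vertex covers): it shows that every prime $\pp\supseteq I$ has $\height(\pp)\geq j$ by splitting on whether $x_1\in\pp$, and in each branch it reduces to the height formula for initial squarefree lexsegment ideals, quoting \cite[Proposition 1.1]{AHH} as a black box. You instead work on the complementary side, writing out the edge set of the underlying graph explicitly in three families and bounding the independence number directly; your case split (on $W\cap\{2,\dots,j-1\}$, then on membership of $1$ and $j$) does not match the paper's, and your hardest subcase ($\{1,j\}\subseteq W$) has no direct counterpart there. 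I checked the structural facts you rely on: the three families do exhaust $L(u,v)$ (given the standing reduction to $j\geq 2$), no edge lies in $\{j+1,\dots,n\}$, each $a\in\{2,\dots,j-1\}$ is adjacent to every larger vertex, $\{1,b\}$ is an edge exactly when $b\geq i$, and the remaining subcases do close using only $i\leq n$, $k\geq j+1$, and $j\leq n-2$ (which the non-final hypothesis supplies). What your approach buys is self-containedness --- no appeal to the Aramova--Herzog--Hibi height formula --- and, as a by-product, an explicit description of the maximal independent sets (hence of all minimal primes) of $I$; what the paper's approach buys is brevity, since citing the known result for initial lexsegments absorbs most of the combinatorics.
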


\begin{proof}
We clearly have $i\geq 3$ and we may assume that $2\leq j\leq n-2.$ It is obviously that $(x_1,\ldots,x_j)\supset I,$ hence 
$\height(I)\leq j,$ 
and, therefore, $\dim(S/I)\geq n-j.$ We show that the other inequality holds as well. This is obvious if $j=2$ since $(x_1,x_2)$ is a 
minimal prime ideal of $I.$ Thus we take $j\geq 3.$

Let us consider $\pp$ a prime ideal which contains $I.$ We distinguish two cases.

Case (a). Let $x_1\in\pp,$ that is, $\pp=(x_1)+\pp^{\prime}$ where $\pp^{\prime}$ is generated by a subset of the set 
$\{x_2,\ldots,x_n\}.$ As $I\subset \pp,$ it follows that $\pp^{\prime}$ contains the initial lexsegment defined by $v$ in the ring 
$K[x_2,\ldots,x_n].$ Therefore, $\height(\pp^{\prime})\geq j-1,$ by \cite[Proposition 1.1.]{AHH}, whence $\height(\pp)\geq j.$

Case (b). Let $x_1\not\in \pp.$ Then $x_i,\ldots,x_n\in\pp,$ that is, $\pp$ has the form $\pp=(x_i,\ldots,x_n)+\pp^{\prime},$ 
where $\pp^{\prime}$ is generated by a subset of $\{x_2,\ldots,x_{i-1}\}.$ We need to consider the following subcases.

Subcase (b1). $x_{i-2}x_{i-1}\geq_{\lex}v.$ Then the ideal generated by all the squarefree monomials of degree $2$ in the variables 
$x_2,\ldots,x_{i-1}$ is contained in $\pp^{\prime}$ which implies that $\height(\pp^{\prime})\geq i-3,$ thus 
$\height(\pp)\geq n-2\geq j.$

Subcase (b2). Let $x_{i-2}x_{i-1}<_{\lex} v.$ Then $\pp^{\prime}$ contains the initial ideal $(L^i(v))\subset K[x_2,\ldots,x_{i-1}].$
It follows that $\height(\pp^{\prime})\geq j-1$ and, therefore, $\height(\pp)\geq n-i+j\geq j.$

Consequently, in all cases, we get $\height(\pp)\geq j$ for any prime ideal $\pp\supset I$ which implies the inequality 
$\dim(S/I)\leq n-j.$ 
\end{proof}

In the second part of this section we compute the depth of $S/I$ for an arbitrary lexsegment edge ideal $I.$

\begin{Proposition}\label{depth1}
Let $u=x_1x_i, v=x_jx_k$ with $j\geq 2,$ and $I=(L(u,v)).$ Then $\depth(S/I)=1$ if and only if $x_{i-1}x_n\geq_{\lex} v.$
\end{Proposition}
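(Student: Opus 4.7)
My plan is to translate the depth statement into a connectivity condition on the complement of the graph $G$ attached to $I$, and then decide it by a short case analysis on $L(u,v)$. Let $G$ be the graph on $[n]$ with edges $\{a,b\}$ whenever $x_ax_b\in I$, let $\Delta$ be its independence complex (so $I=I_\Delta$), and write $\bar G$ for the complement graph.

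The first step I would prove is the reduction
\[
\depth(S/I)=1\iff \bar G \text{ is disconnected.}
\]
Since $I$ is a radical ideal whose generators each omit some variable, $\mm\notin\Ass(S/I)$, so $\depth(S/I)\ge 1$; by Auslander--Buchsbaum the question becomes whether $\projdim_S(S/I)=n-1$. Hochster's formula
\[
\beta_{n-1,W}(S/I)=\dim_K\tilde H_{|W|-n}(\Delta|_W;K)
\]
shows that only $W=[n]$ can contribute, because each $\Delta|_W$ contains the empty face and all vertex faces $\{v\}$, $v\in W$ (no variable lies in $I$), forcing $\tilde H_{-1}(\Delta|_W)=0$. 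Hence $\projdim_S(S/I)=n-1$ iff $\tilde H_0(\Delta;K)\ne 0$, iff the $1$-skeleton of $\Delta$, which is precisely $\bar G$, is disconnected.

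Next I would enumerate the edges of $\bar G$. A pair $\{a,b\}$, $a<b$, is a non-edge of $G$ iff $x_ax_b>_{\lex}u$ or $x_ax_b<_{\lex}v$, and unpacking these gives exactly three families: (I) $\{1,b\}$ with $2\le b\le i-1$; (II) $\{a,b\}$ with $j<a<b\le n$; (III) $\{j,b\}$ with $k<b\le n$. Thus $\bar G$ contains a star at $1$ covering $\{2,\dots,i-1\}$, a clique on $\{j+1,\dots,n\}$, and the edges (III), which are the only potential bridge from $j$ to that clique. A short case analysis on where $i-1$ sits relative to $j$ and whether $k=n$ now settles connectedness. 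If $i\ge j+2$, the star (I) reaches $j+1$ and merges with the clique (II), covering $[n]$; if $i=j+1$ and $k<n$, the edge $\{j,k+1\}$ from (III) fuses the (I)- and (II)-parts. Otherwise, $\bar G$ is disconnected: for $i<j$ the vertices of $\{i,\dots,j-1\}$ are isolated, for $i=j$ the star $\{1,\dots,j-1\}$ is cut off from the rest, and for $i=j+1$ with $k=n$ the family (III) is empty, so $\{1,\dots,j\}$ and $\{j+1,\dots,n\}$ fall apart. A direct lex comparison identifies the disconnected regime as $i\le j$ or ($i=j+1$ and $k=n$), which is precisely $x_{i-1}x_n\ge_{\lex}v$.

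The main obstacle I expect is the first step: recognising that Hochster's formula collapses, at homological degree $n-1$, to a purely global condition on $\Delta$. Once the vanishing of $\tilde H_{-1}(\Delta|_W)$ for $W\ne [n]$ is observed, the remainder is elementary graph theory; the only mild pitfalls are the degenerate boundary cases ($i=2$, $j=2$, or $i=j$), where one of the families (I)--(III) is empty and the claimed (dis)connectedness of $\bar G$ must be verified by hand.
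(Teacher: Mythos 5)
Your proof is correct and follows essentially the same route as the paper: both reduce $\depth(S/I)=1$ to disconnectedness of the $1$-skeleton of the Stanley--Reisner complex (your $\bar G$) and then settle connectivity by elementary lexicographic bookkeeping, your exhaustive classification of the components of $\bar G$ matching the paper's two-direction argument and its explicit partition $V_1=\{1,\dots,i-1\}$, $V_2=\{i,\dots,n\}$. The only difference is that you supply a Hochster-formula justification of the criterion ``$\depth(S/I)=1$ iff $\Delta$ disconnected,'' which the paper simply cites as known.
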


\begin{proof}
Let $\Delta$ be the simplicial complex on the vertex set $[n]$ whose Stanley-Reisner ideal is $I.$ It is known that $\depth(S/I)=1$ 
if and only if $\Delta$ is disconnected, which, in turn, is equivalent to the fact that the skeleton 
$\Delta^{(1)}=\{F\in \Delta\colon \dim F\leq 1\}$ of $\Delta$ is disconnected. 

In the first place we consider $\Delta^{(1)}$ disconnected. Let $V_1,V_2\neq \emptyset,$ $V_1\cup V_2=[n],$ $V_1\cap V_2=\emptyset,$ 
and such that no face of $\Delta^{(1)}$ has vertices in both $V_1$ and $V_2.$ One may assume that $1\in V_1.$ Then, since 
$\{1,2\},\ldots,\{1,i-1\}\in \Delta^{(1)},$ we must have $2,\ldots,i-1\in V_1.$ Let us assume that $v>_{\lex}x_{i-1}x_n.$ Then 
$\{\ell, n\}\in \Delta^{(1)}$ for all $\ell\geq i-1$ which implies that $i,\ldots,n\in V_1$ as well. This leads to $V_1=[n]$ which is a 
contradiction to our hypothesis.

For the converse, let $x_{i-1}x_n\geq_{\lex} v.$ We claim that $\Delta^{(1)}$ is disconnected. Indeed, one may choose 
$V_1=\{1,\ldots,i-1\}$ and $V_2=\{i,\ldots,n\}$ and observe that for any $1\leq r\leq i-1$ and $i\leq s\leq n$ we have $x_rx_s\in I$,
hence $\{r,s\}\not\in \Delta^{(1)}.$
\end{proof}

\begin{Corollary}\label{pd1}
Let $u$ and $v$ as in the above proposition. Then $\projdim_S(S/I)=n-1$ if and only if $x_{i-1}x_n\geq_{\lex}v.$
\end{Corollary}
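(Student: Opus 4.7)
The plan is to deduce this immediately from Proposition~\ref{depth1} by invoking the Auslander--Buchsbaum formula. Since $S=K[x_1,\ldots,x_n]$ is a regular ring and $S/I$ is a finitely generated graded $S$-module, the graded version of the Auslander--Buchsbaum formula applies and gives
\[
\projdim_S(S/I) + \depth(S/I) \;=\; \depth(S) \;=\; n.
\]
Therefore $\projdim_S(S/I)=n-1$ holds if and only if $\depth(S/I)=1$, and by Proposition~\ref{depth1} this latter condition is equivalent to $x_{i-1}x_n\geq_{\lex} v$.

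In writing the proof I would simply chain these two equivalences together and refer back to Proposition~\ref{depth1}. There is essentially no obstacle: the corollary is nothing more than the homological translation, via a standard tool, of the depth-one characterization already established. One could optionally include a brief sanity check that $\depth(S/I)\geq 1$ for any proper squarefree monomial ideal $I$ generated in degree two, since the graded maximal ideal of $S$ is never an associated prime of $S/I$ in that situation; this is not needed for the deduction itself but confirms that the condition $\depth(S/I)=1$ genuinely picks out the extremal value $n-1$ of the projective dimension, rather than some smaller value.
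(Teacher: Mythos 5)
Your proposal is correct and is precisely the intended derivation: the paper states the corollary without proof immediately after Proposition~\ref{depth1}, leaving implicit exactly the Auslander--Buchsbaum translation $\projdim_S(S/I)=n-\depth(S/I)$ that you spell out. Nothing further is needed.
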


Next we compute the depth of $S/I$ in the case when $v=x_jx_k$ with $j\geq 2$ and $v>_{\lex}x_{i-1}x_n.$ In the next lemma we 
investigate the case  $j\geq 3.$

\begin{Lemma}\label{depth2}
Let $I=(L(u,v))$ where $u=x_1x_i, v=x_jx_k$, $j\geq 3,$ and $v>_{\lex}x_{i-1}x_n.$ Then $\depth(S/I)=2.$
\end{Lemma}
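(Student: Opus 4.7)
The plan is to sandwich $\depth(S/I)$ between $2$ and $2$. The lower bound $\depth(S/I) \geq 2$ is immediate from Proposition \ref{depth1}: the hypothesis $v >_{\lex} x_{i-1}x_n$ rules out depth $1$, and the depth is at least $1$ because no variable lies in the degree-$2$ ideal $I$, so no variable is a zero-divisor on $S/I$.

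For the upper bound $\depth(S/I) \leq 2$, by the Auslander--Buchsbaum formula it suffices to establish $\projdim_S(S/I) \geq n-2$. I would apply Hochster's formula
\[
\beta_{i,\sigma}(S/I) = \dim_K \tilde{H}_{|\sigma|-i-1}(\Delta_\sigma;K),
\]
where $\Delta$ is the Stanley-Reisner complex of $I$ and $\Delta_\sigma=\{F\in\Delta : F\subseteq\sigma\}$, and search for a subset $\sigma\subseteq[n]$ of cardinality $n-1$ such that $\Delta_\sigma$ is disconnected: this would yield $\tilde{H}_0(\Delta_\sigma;K)\neq 0$ and hence $\beta_{n-2,\sigma}(S/I)\neq 0$.

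The candidate subset is $\sigma=\{2,3,\ldots,n\}$, partitioned as $\sigma_1=\{2,\ldots,j-1\}$ and $\sigma_2=\{j,j+1,\ldots,n\}$; the hypothesis $j\geq 3$ makes $\sigma_1$ non-empty. To see that no edge of $\Delta$ connects $\sigma_1$ to $\sigma_2$, note that for any $r\in\sigma_1$ and $s\in\sigma_2$ with $r<s$, the inequality $r\geq 2$ gives $x_rx_s<_{\lex}x_1x_i=u$, while $r<j$ gives $x_rx_s>_{\lex}x_jx_k=v$, so $x_rx_s\in L(u,v)\subseteq I$ and $\{r,s\}$ is a non-face of $\Delta$. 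This forces $\Delta_\sigma$ to split into (at least) the two non-empty pieces $\sigma_1$ and $\sigma_2$, as desired.

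The main conceptual step is pivoting from the depth computation to Hochster's formula via Auslander--Buchsbaum; once one spots that the cut on the vertex set $\{2,\ldots,n\}$ should be placed between $j-1$ and $j$, the verification is a routine lex-order check. I do not anticipate serious difficulties. Observe that the hypothesis $v>_{\lex}x_{i-1}x_n$ enters only in the lower bound, whereas the upper bound draws only on $j\geq 3$.
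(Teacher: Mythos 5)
Your argument is correct, but your route to the upper bound $\depth(S/I)\le 2$ is genuinely different from the paper's. The paper observes that $\{1,2\}$ is a facet of the Stanley--Reisner complex $\Delta$ of $I$ (because $x_2x_p\in I$ for every $p\ge 3$, which uses $j\ge 3$), so that $(x_3,\dots,x_n)$ is a minimal, hence associated, prime of $I$ and $\depth(S/I)\le\dim S/(x_3,\dots,x_n)=2$. You instead invoke Hochster's formula and exhibit a disconnected induced subcomplex on the $(n-1)$-element set $\sigma=\{2,\dots,n\}$, cut between $\{2,\dots,j-1\}$ and $\{j,\dots,n\}$; your lex check that every cross edge $x_rx_s$ (with $2\le r\le j-1<j\le s$) lies in $L(u,v)$ is right, and the bookkeeping $\tilde{H}_0(\Delta_\sigma;K)\ne 0\Rightarrow\beta_{n-2,\sigma}(S/I)\ne 0\Rightarrow\projdim_S(S/I)\ge n-2$ is correct. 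Both arguments use only $j\ge 3$ for the upper bound, as you note. The paper's version is more elementary (no Hochster's formula, just the bound $\depth(S/I)\le\dim S/\pp$ for $\pp\in\Ass(S/I)$), while yours pinpoints a specific multidegree carrying a nonzero top Betti number, which is marginally more information.

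One small inaccuracy in your lower bound: the assertion that ``no variable lies in $I$, so no variable is a zero-divisor on $S/I$'' is false as stated (for instance $x_2$ kills the class of $x_n$, since $x_2x_n\in I$ when $j\ge 3$). What you actually need, and what is true, is that $\mm\notin\Ass(S/I)$: since $I$ is squarefree, $\Ass(S/I)$ consists of the minimal primes, and none of these is $\mm$ because every vertex is a face of $\Delta$ (this is where ``no variable lies in $I$'' enters). Hence $\depth(S/I)\ge 1$, and Proposition \ref{depth1} then excludes the value $1$, giving $\depth(S/I)\ge 2$ as claimed.
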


\begin{proof}
By the hypothesis on $v$ we have $\depth(S/I)\geq 2.$ Let $\Delta$ be the simplicial complex on $[n]$ such that $I=I_{\Delta}.$  We 
claim that $\{1,2\}$ is a facet of $\Delta.$ Indeed, if $3\leq 
p\leq n,$ then $\{1,2,p\}\not\in \Delta$ since $x_2x_p\in I_{\Delta}.$ Thus $(x_3,\ldots,x_n)$ is a minimal prime of $I$ and so 
$\depth (S/I)\leq 2.$
\end{proof}

It remains to consider the case $v=x_2x_k$ for some $k\geq 3.$

\begin{Lemma}\label{depth3}
Let $u=x_1x_i, v=x_2x_k>_{\lex}x_{i-1}x_n$ and $I=(L(u,v)).$ Then
\[
\depth(S/I)=\left\{
\begin{array}{ll}
	2, & \text{ if }\ k\geq i,\\
	i+1-k, & \text{ if }\ i>k.
\end{array}\right.
\]
\end{Lemma}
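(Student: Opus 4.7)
The plan is to handle the two cases separately, using the explicit list of generators $x_1x_i,x_1x_{i+1},\ldots,x_1x_n,x_2x_3,\ldots,x_2x_k$ of $I$.

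In the case $i>k$, the first family $\{x_1x_\ell:i\leq\ell\leq n\}$ involves only the variables $\{x_1,x_i,\ldots,x_n\}$ and the second family $\{x_2x_\ell:3\leq\ell\leq k\}$ involves only $\{x_2,\ldots,x_k\}$; since $k<i$ these two sets are disjoint, and the variables $x_{k+1},\ldots,x_{i-1}$ occur in no generator at all. Hence one has a tensor product decomposition
\[
S/I \cong \frac{K[x_1,x_i,\ldots,x_n]}{(x_1x_i,\ldots,x_1x_n)} \otimes_K \frac{K[x_2,\ldots,x_k]}{(x_2x_3,\ldots,x_2x_k)} \otimes_K K[x_{k+1},\ldots,x_{i-1}].
\]
Depth is additive over such tensor products. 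Each of the first two factors has Stanley--Reisner complex equal to the disjoint union of an isolated vertex (indexed by $x_1$, respectively $x_2$) and a full simplex on the remaining variables, hence is disconnected and contributes depth $1$ by the criterion used in the proof of Proposition \ref{depth1}. The polynomial ring factor contributes $i-1-k$. Adding gives $\depth(S/I)=1+1+(i-1-k)=i+1-k$, as required.

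In the case $k\geq i$, the lower bound $\depth(S/I)\geq 2$ is immediate from Proposition \ref{depth1} since $v>_{\lex} x_{i-1}x_n$. For the matching upper bound I would exhibit $\{1,2\}$ as a facet of the Stanley--Reisner complex $\Delta$ of $I$: we have $\{1,2\}\in\Delta$ because $i\geq 3$ forces $x_1x_2\notin I$, and for any $p\in\{3,\ldots,n\}$ the set $\{1,2,p\}$ would require $x_1x_p\notin I$ (hence $p<i$) and $x_2x_p\notin I$ (hence $p>k$), which is impossible when $k\geq i$. Thus $\{1,2\}$ is a facet of $\Delta$, so $\pp=(x_3,\ldots,x_n)$ is a minimal prime of $I$, giving $\depth(S/I)\leq\dim(S/\pp)=2$.

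The main thing to get right is the clean three-way splitting of the variable set in the case $i>k$; once that is in place both halves of the argument are essentially immediate, since the lower bound needed when $k\geq i$ is already supplied by the previous proposition.
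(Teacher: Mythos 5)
Your proof is correct, but it takes a different route from the paper's in the case $i>k$. There the paper observes that every minimal generator of $I$ has a ``private'' variable dividing no other generator (for $x_1x_\ell$ with $\ell\geq i$ the variable $x_\ell$, for $x_2x_m$ with $3\leq m\leq k$ the variable $x_m$; this uses $k<i$), so the Taylor complex is a minimal free resolution, $\projdim_S(S/I)=\mu(I)=n+k-i-1$, and Auslander--Buchsbaum gives $\depth(S/I)=i+1-k$. Your tensor decomposition
\[
S/I \cong \frac{K[x_1,x_i,\ldots,x_n]}{(x_1x_i,\ldots,x_1x_n)} \otimes_K \frac{K[x_2,\ldots,x_k]}{(x_2x_3,\ldots,x_2x_k)} \otimes_K K[x_{k+1},\ldots,x_{i-1}]
\]
is valid (the three variable sets are indeed disjoint and cover $[n]$ precisely because $k<i$), and additivity of depth over tensor products of graded $K$-algebras, together with the disconnectedness criterion, gives the same answer $1+1+(i-1-k)$; you should cite or briefly justify that additivity, which is standard. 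Note that the paper's version has a side benefit: the minimality of the Taylor resolution and the identity $\projdim_S(S/I)=\mu(I)$ are reused verbatim in the proof of Theorem \ref{aralei} to get $\ara(I)=\mu(I)=\projdim_S(S/I)$ in this subcase, whereas your argument only yields the depth. In the case $k\geq i$, the paper writes down the explicit primary decomposition $I=(x_1,x_2)\cap(x_1,x_3,\ldots,x_k)\cap(x_2,x_i,\ldots,x_n)\cap(x_3,\ldots,x_n)$ and reads off $\depth\leq 2$ from the component $(x_3,\ldots,x_n)$; your argument that $\{1,2\}$ is a facet of $\Delta$ identifies exactly the same minimal prime (it is the device used in the paper's Lemma \ref{depth2}) and is, if anything, cleaner since it avoids verifying the full decomposition. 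Both halves of your argument are sound.
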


\begin{proof}
Let us first consider $k\geq i.$ One may easily see that $I$ has the following primary decomposition
\[
I=(x_1,x_2)\cap(x_1,x_3,\ldots,x_k)\cap(x_2,x_i,\ldots,x_n)\cap(x_3,\ldots,x_n).
\]
Hence $\depth(S/I)\leq 2,$ which is enough by Proposition \ref{depth1}.

For $i>k$ one checks that the minimal monomial generators of $I,$ let us say, $m_1,\ldots,m_r,$ satisfy the following condition: 
for any $1\leq i\leq r,$ there exists $1\leq j\leq n$ such that $x_j| m_i$ and $x_j\not|m_{\ell}$ for all $\ell\neq i.$ This implies that the Taylor resolution of $S/I$ is minimal and, therefore, $\projdim_S(S/I)$ is equal to the number of the minimal monomial generators of $I$, that is, $\projdim_S(S/I)=n+k-i-1.$ Consequently, $\depth(S/I)=i+1-k.$
\end{proof}

Based on the above formulas for dimension and depth we can easily recover the characterization of the Cohen-Macaulay lexsegment edge ideals given in \cite{BS2}.

\begin{Corollary}
Let $I=(L(u,v))$ be a lexsegment edge ideal with $x_1|u$ and $u \ne v$. Then $I$ is Cohen-Macaulay if and only if one of the following conditions holds:
\begin{itemize}
	\item [(i)] $I=I_{n,2}$.
	\item [(ii)] $u=x_1x_n$ and $v\in\{x_2x_3,x_{n-2}x_{n-1},x_{n-2}x_n\}$ for $n\geq 4$.
	\item [(iii)] $u=x_1x_{n-1}$, $v=x_{n-2}x_{n-1}$ for $n\geq 3$.
\end{itemize}
\end{Corollary}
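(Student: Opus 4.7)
The plan is to compare the formulas for $\dim(S/I)$ and $\depth(S/I)$ already established in this section, since $I$ is Cohen-Macaulay precisely when these two invariants coincide. I would organize the analysis according to whether $\depth(S/I)=1$ or $\depth(S/I)\geq 2$, and within each regime I would walk through the possibilities initial/final/general for the pair $(u,v)$.

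First I would handle $\depth(S/I)=1$, which by Proposition \ref{depth1} is equivalent to $x_{i-1}x_n\geq_{\lex}v$. Cohen-Macaulayness then demands $\dim(S/I)=1$. For an initial lexsegment ($u=x_1x_2$) the formula $\dim(S/I)=n-j$ from \cite[Proposition 1.1]{AHH} gives $j=n-1$, forcing $v=x_{n-1}x_n$ and hence $I=I_{n,2}$, which is case (i). A final lexsegment has $\dim(S/I)=2$ as soon as $i\geq 3$, and a non-initial, non-final lexsegment with $j=n-1$ would force $v=x_{n-1}x_n$, contradicting the assumption; so no further examples arise here.

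Next I would treat $\depth(S/I)\geq 2$, i.e., $v>_{\lex}x_{i-1}x_n$. A short direct check shows that both initial and final lexsegments always satisfy the opposite inequality $x_{i-1}x_n\geq_{\lex}v$, so $I$ must be neither initial nor final, and thus $\dim(S/I)=n-j$. If $j\geq 3$, Lemma \ref{depth2} gives $\depth(S/I)=2$ and Cohen-Macaulayness forces $j=n-2$, so $v\in\{x_{n-2}x_{n-1},x_{n-2}x_n\}$; inserting each into $v>_{\lex}x_{i-1}x_n$ pins down the admissible $i$ and produces case (iii) together with the two entries of (ii) involving these choices of $v$. If $j=2$, write $v=x_2x_k$ and invoke Lemma \ref{depth3}: the subcase $k\geq i$ forces $n=4$ and recovers the small-$n$ instances of (ii) and (iii) already listed, while the subcase $k<i$ yields $k=i+3-n$, which together with $k\geq 3$ forces $i=n$ and $k=3$, i.e., $u=x_1x_n$, $v=x_2x_3$, the remaining configuration of case (ii).

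The main obstacle is purely bookkeeping: one must keep the lex comparisons $v>_{\lex}x_{i-1}x_n$ consistent with the constraints on $i,j,k$ that each dimension/depth regime imposes, and one must not overlook the boundary values of $n$ in the subcase $j=2$. The converse direction, that each of the pairs $(u,v)$ listed in (i)--(iii) is Cohen-Macaulay, is immediate by substituting back into the formulas of this section and observing that $\dim(S/I)=\depth(S/I)$.
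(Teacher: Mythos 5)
Your argument is correct and is exactly the derivation the paper has in mind: the paper gives no explicit proof of this corollary, merely asserting that it follows from the preceding dimension and depth formulas, and your case analysis (splitting on $\depth(S/I)=1$ versus $\depth(S/I)\geq 2$ via Proposition \ref{depth1}, then matching $\dim(S/I)=n-j$ against Lemmas \ref{depth2} and \ref{depth3}) is precisely that omitted bookkeeping, carried out correctly. The only caveat is that you should note the standing reduction to $j\geq 2$ made at the start of the section, so that the case $v=x_1x_k$ is legitimately excluded.
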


In the last part of this section we compute the regularity of a lexsegment edge ideal.

We first notice that if $I$ is an initial or final lexsegment edge ideal, then $\reg(I)=2$ since $I$ has a linear resolution.
Therefore we may consider that $u\neq x_1x_2$, that is, $i\geq 3,$ and $v\neq x_{n-1}x_n,$ in other words, $2\leq j\leq n-2.$

\begin{Lemma}
Let $I=L(u,v)$ be a lexsegment edge ideal. Then $\reg(I)\in \{2,3\}.$
\end{Lemma}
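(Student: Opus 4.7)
Since $I$ is generated in degree two, the lower bound $\reg(I)\ge 2$ is automatic; the content is the upper bound $\reg(I)\le 3$. My plan is to use the standard short exact sequence
\[
0\longrightarrow (S/(I\colon x_1))(-1) \longrightarrow S/I \longrightarrow S/(I,x_1) \longrightarrow 0,
\]
with leftmost arrow multiplication by $x_1$. This gives $\reg(I)\le \max\{\reg(I\colon x_1)+1,\ \reg((I,x_1))\}$, and I will show that both $\reg(I\colon x_1)\le 2$ and $\reg((I,x_1))\le 2$, which forces $\reg(I)\le 3$.

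For $(I,x_1)$: minimally $(I,x_1)=(x_1)+I'$, where $I'\subset K[x_2,\dots,x_n]$ is generated by the monomials of $L(u,v)$ not divisible by $x_1$. Since $j\ge 2$, the lex top of $I'$ is $x_2x_3$, so $I'$ is an \emph{initial} lexsegment ideal in $K[x_2,\dots,x_n]$ and hence has a linear resolution by \cite[Proposition~1.1]{AHH}. Because $(x_1)$ and $I'$ are supported on disjoint variable sets, a K\"unneth-type computation (the minimal resolution of $S/(I,x_1)$ is the tensor product of those of $K[x_1]/(x_1)$ and $K[x_2,\dots,x_n]/I'$) yields $\reg(S/(I,x_1))=0+1=1$, and thus $\reg((I,x_1))=2$.

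For $(I\colon x_1)$: a direct colon computation gives $(I\colon x_1)=(x_i,\dots,x_n)+J$, where $J$ is generated by the monomials of $L(u,v)$ supported entirely on $\{x_2,\dots,x_{i-1}\}$ (the remaining generators of $L(u,v)$ are absorbed by $(x_i,\dots,x_n)$). When $i=3$, $J=0$ and $\reg(I\colon x_1)=1$; otherwise the lex top of $J$ is again $x_2x_3$, so $J$ is once more an initial lexsegment in $K[x_2,\dots,x_{i-1}]$ and hence has a linear resolution. Another application of the K\"unneth argument over the disjoint variable sets $\{x_i,\dots,x_n\}$ and $\{x_2,\dots,x_{i-1}\}$ gives $\reg((I\colon x_1))=2$.

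Plugging these bounds into the displayed short exact sequence yields $\reg(I)\le \max\{3,2\}=3$, as required. The only delicate step is verifying that the lex top of $L(u,v)\cap K[x_2,\dots,x_n]$, and of the generators of $L(u,v)$ inside $K[x_2,\dots,x_{i-1}]$, is always $x_2x_3$; this is what identifies $I'$ and $J$ as initial lexsegments in their respective subrings, and it reduces to a direct lex comparison with $v=x_jx_k$ using the standing assumption $j\ge 2$.
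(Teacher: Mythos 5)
Your proof is correct, but it takes a genuinely different route from the paper's. The paper decomposes $I=J+J'$ with $J=(L(u,x_1x_n))=x_1(x_i,\ldots,x_n)$ and $J'=(L(x_2x_3,v))$, notes that both summands have linear resolutions, and then invokes the Kalai--Meshulam subadditivity theorem $\reg(J+J')\leq\reg(J)+\reg(J')-1=3$. You instead isolate the variable $x_1$ through the exact sequence $0\to (S/(I\colon x_1))(-1)\to S/I\to S/(I,x_1)\to 0$ and compute both $(I,x_1)=(x_1)+L^i(v)$ and $(I\colon x_1)=(x_i,\ldots,x_n)+J$ explicitly as sums of ideals on disjoint variable sets, each of regularity at most $2$; your identification of the residual pieces as initial lexsegments in the subrings $K[x_2,\ldots,x_n]$ and $K[x_2,\ldots,x_{i-1}]$ is accurate (the set of degree-two monomials of the subring that are $\geq_{\lex}v$ is lex-downward-closed from the top, hence an initial lexsegment there, even when $v$ itself is not supported in the subring), and the $i=3$ degenerate case is handled. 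Both arguments ultimately rest on the same input --- initial squarefree lexsegments have linear resolutions --- but the trade-off is clear: the paper's proof is a one-liner at the price of citing the nontrivial external result of Kalai--Meshulam, while yours is self-contained and elementary (only the standard regularity estimate from a short exact sequence plus a K\"unneth computation), at the cost of an explicit, slightly longer computation of the colon and sum ideals.
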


\begin{proof}
The ideal $I$ can be decomposed as $I=J+J^{\prime}$ where $J$ is generated by the lexsegment $L(u,x_1x_n)$ and $J^{\prime}$ by 
$L(x_2x_3,v).$ Both ideals $J$ and $J^{\prime}$ have a linear resolution, hence $\reg(J)=\reg(J^{\prime})=2$. By \cite{KM} (see also \cite{H} and \cite{T}), it follows that $\reg(I)\leq \reg(J)+\reg(J^{\prime})-1=3.$
\end{proof}

This easy lemma shows that we have to distinguish only between two possible values of the regularity of $I.$

In the first place we recall the characterization of the squarefree lexsegment ideals of arbitrary degree which have a linear resolution
(see \cite{BS} or \cite{BEOS}). The characterization depends on whether or not the lexsegment is complete. For the next two 
results we recall the following well-known notation. If $w\in S$ is a monomial we denote $\max(w)=\max\{j\colon x_j|w\}$ and 
$\min(w)=\min\{j\colon x_j|w\}.$

\begin{Theorem}[\cite{BS},\cite{BEOS}]\label{caractcomp}
Let $u=x_1x_{i_2}\cdots x_{i_d}$ and $v=x_{j_1}\cdots x_{j_d}$ be two squarefree monomials of degree $d\geq 2$ and $I=(L(u,v))$ the 
squarefree lexsegment ideal generated by the lexsegment set $L(u,v).$ The following statements are equivalent:
\begin{itemize}
	\item [(a)] $I$ is a completely squarefree lexsegment ideal, that is,  the squarefree shadow of $L(u,v)$ is a  lexsegment set too.
	\item [(b)] For any squarefree monomial $w$ of degree $d,$ $w<_{\lex} v,$ there exists $i> 1$ such that $x_i|w$ and $x_1w/x_i\leq_{\lex}u.$
\end{itemize}
\end{Theorem}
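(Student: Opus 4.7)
The plan is to prove the two implications separately, with the squarefree shadow $\shad(L(u,v))$ serving as the bridge.

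For (a) $\Rightarrow$ (b), let $w$ be a squarefree monomial of degree $d$ with $w <_{\lex} v$. I would first dispense with the case $x_1 \mid w$: in that situation, for any $i > 1$ with $x_i \mid w$, the monomial $x_1 w/x_i$ carries $x_1^2$ and is strictly lex-greater than the squarefree monomial $u$, so the conclusion of (b) fails; correspondingly, one can then exhibit a lex-intermediate squarefree monomial of degree $d+1$ that is not in the shadow, contradicting (a). Thus $x_1 \nmid w$, and $x_1 w$ is squarefree of degree $d+1$. The key step is to show $x_1 w \in \shad(L(u,v))$; since (a) asserts the shadow is a lexsegment, this reduces to bounding $x_1 w$ in lex order between the maximum and the minimum of the shadow, both of which are readily described in terms of $u$ and $v$ and the unused variables. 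Once $x_1 w \in \shad(L(u,v))$, write $x_1 w = x_i m$ with $m \in L(u,v)$ and $x_i \nmid m$; the option $i = 1$ would force $m = w \notin L(u,v)$, so $i > 1$, and $m = x_1 w/x_i \leq_{\lex} u$ is exactly the conclusion of (b).

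For (b) $\Rightarrow$ (a), denote by $\tilde{u}$ and $\tilde{v}$ the lex-maximum and lex-minimum of $\shad(L(u,v))$; both can be described explicitly from $u$ and $v$. The inclusion $\shad(L(u,v)) \subseteq L(\tilde{u},\tilde{v})$ is automatic, so I take any squarefree monomial $p$ of degree $d+1$ with $\tilde{v} \leq_{\lex} p \leq_{\lex} \tilde{u}$ and must produce a degree-$d$ divisor of $p$ lying in $L(u,v)$. When $x_1 \nmid p$, the argument is direct: an appropriate choice of $p/x_j$, for instance with $j$ the largest index dividing $p$, already sits in $L(u,v)$ thanks to the position of $p$ relative to $\tilde{v}$. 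The interesting case is $x_1 \mid p$: set $w = p/x_1$; if $w \geq_{\lex} v$, then $w$ itself is the required divisor, and otherwise $w <_{\lex} v$ and (b) furnishes $i > 1$ with $x_i \mid w$ and $m := x_1 w/x_i = p/x_i \leq_{\lex} u$, while $m \geq_{\lex} v$ follows because $x_1 \mid m$ and $v \leq_{\lex} u$ (with a short sub-case analysis when $x_1 \mid v$).

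The main obstacle is the careful identification of the extremes $\tilde{u}, \tilde{v}$ of the shadow---they are not always of the form $x_r u$ and $x_s v$, since a lift of $v$ can be lex-larger than a lift of $u$ when $v$ begins with small indices---and the corresponding case analysis in both directions. Condition (b) is engineered precisely to handle the single awkward case in (b) $\Rightarrow$ (a) where the naive divisor $p/x_j$ would fall below $v$ in lex order; the complementary case ($x_1 \nmid p$) must be dispatched by a direct lex-order comparison that is routine but not automatic.
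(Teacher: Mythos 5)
First, a point of reference: the paper offers no proof of Theorem \ref{caractcomp} --- it is quoted from \cite{BS} and \cite{BEOS} --- so there is no internal argument to measure yours against. Judged on its own, your outline follows the natural route and its core is sound. For (a)$\Rightarrow$(b) with $x_1\nmid w$, the monomial $x_1w$ is squarefree, lies lex-below $x_1v\in\shad(L(u,v))$ and lex-above any element of the shadow not divisible by $x_1$ (such an element exists as soon as $n\geq d+2$; the remaining degenerate cases are easy), so it belongs to the shadow once the shadow is a lexsegment, and writing $x_1w=x_im$ with $m\in L(u,v)$ forces $i>1$ and yields (b). For (b)$\Rightarrow$(a), your observation that every squarefree $p$ of degree $d+1$ with $x_1\mid p$ lands in the shadow (via $w=p/x_1\geq_{\lex}v$ together with the automatic bound $x_1\nmid w\Rightarrow w<_{\lex}u$, or via (b) when $w<_{\lex}v$) is correct, and the case $x_1\nmid p$ reduces to the fact that for squarefree monomials $q'\geq_{\lex}q$ of equal degree the lex-largest degree-$d$ divisor of $q'$ is $\geq_{\lex}$ that of $q$; this is the ``routine'' comparison you allude to, and it does go through.

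The one step that fails is your dismissal of the case $x_1\mid w$. You claim that if some squarefree $w<_{\lex}v$ of degree $d$ is divisible by $x_1$, then (a) must also fail because some lex-intermediate monomial of degree $d+1$ is missing from the shadow. That is false: take $n=4$, $u=x_1x_2$, $v=x_1x_3$. Then $\shad(L(u,v))=\{x_1x_2x_3,\,x_1x_2x_4,\,x_1x_3x_4\}$ is a lexsegment, so (a) holds; yet $w=x_1x_4<_{\lex}v$ admits only $i=4$, and $x_1w/x_4=x_1^2>_{\lex}u$, so (b) fails. Thus, as literally stated, (a) and (b) need not be equivalent when $x_1\mid v$. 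The statement carries the implicit hypothesis $j_1\geq 2$ (made explicit in Theorem \ref{linearnoncomp} and satisfied in every application in this paper), under which no $w<_{\lex}v$ is divisible by $x_1$ and the offending case simply never arises. With that hypothesis in place and that case deleted, your argument is correct.
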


For this class of ideals we have the following result.

\begin{Theorem}[\cite{BS},\cite{BEOS}]\label{linearcomp}
Let $u=x_1x_{i_2}\cdots x_{i_d}$ and $v=x_{j_1}\cdots x_{j_d}$ be two squarefree monomials of degree $d\geq 2$ and $I=(L(u,v))$ the 
squarefree lexsegment ideal generated by the lexsegment set $L(u,v).$ Assume that $I$ is a completely squarefree lexsegment ideal. Let $B$ be 
the set of all the squarefree monomials $w$ of degree $d$ such that $w < _{lex} v$ and $x_1w/x_{\max(w)} > u.$ Then $I$ has a linear 
resolution if and only if $I$ is a final squarefree lexsegment  ideal or the following condition holds:
for all  $(w_1,w_2)\in B\times B $ such that $w_1\neq w_2 $  and $x_1w_1/x_{\min(w_1)}\leq u$, there exists an index $\ell$ such that   $\min(w_1)\leq \ell < \max(w_2),$  $x_\ell| w_2,$ $x_1w_2/x_{\ell}\leq u$ and 
$w_1/x_{\min(w_1)}\neq w_2/x_{\ell}.$
\end{Theorem}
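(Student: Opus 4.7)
My strategy is to prove the theorem by decomposing $I$ according to the presence of $x_1$ in the generators and then running a Mayer-Vietoris-type argument to isolate the linear strand of the minimal free resolution. I would first dispose of the case when $I$ is a final squarefree lexsegment ideal: there the generators of $I$ form a squarefree strongly stable set in $K[x_1,\ldots,x_n]$, so the squarefree Eliahou-Kervaire formula (Aramova-Herzog-Hibi) gives a $d$-linear resolution directly.

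Assume now $I$ is not final. Write $I=I_1+I_2$, where $I_1$ is generated by those $w\in L(u,v)$ with $x_1 | w$ and $I_2$ by those with $x_1\nmid w$. The ideal $I_2\subset K[x_2,\ldots,x_n]$ is itself a final squarefree lexsegment ideal in one fewer variable and thus has a $d$-linear resolution by the previous step. The ideal $I_1=x_1 J$, where $J\subset K[x_2,\ldots,x_n]$ is generated by a lexsegment, and the completely squarefree assumption (Theorem \ref{caractcomp}) ensures $J$ is squarefree strongly stable, so $I_1$ is $d$-linear too. Applying $\Tor^S_\bullet(-,K)$ to the Mayer-Vietoris sequence
$$0\to I_1\cap I_2\to I_1\oplus I_2\to I\to 0$$
reduces the question of $d$-linearity of $I$ to the question of $(d+1)$-linearity of $I_1\cap I_2$: the long exact sequence puts the possible off-linear-strand Betti numbers of $I$ in correspondence with those of $I_1\cap I_2$ shifted up by one homological degree.

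The combinatorial heart of the argument is then an explicit description of the minimal generators and first syzygies of $I_1\cap I_2$ in terms of $B$. A minimal generator of $I_1\cap I_2$ in degree $d+1$ must have the form $x_1 w$ with $w$ squarefree of degree $d$; minimality forces $w\notin L(u,v)$ (equivalently $w<_{\lex}v$, since $w\in L(u,v)$ would already place $w$ inside $I_2$), while some lift $x_1 w/x_i$ must nevertheless lie in $L(u,v)$. The condition $x_1 w/x_{\max(w)}>_{\lex}u$ precisely excludes the lift via the largest index and is what carves out the set $B$. First syzygies among $\{x_1 w:w\in B\}$ then arise from pairs $(w_1,w_2)\in B\times B$ with a common degree-$(d+2)$ lcm, and $(d+1)$-linearity of $I_1\cap I_2$ translates exactly into the existence of an index $\ell$ as in the statement whenever $x_1 w_1/x_{\min(w_1)}\leq u$; the constraint $w_1/x_{\min(w_1)}\neq w_2/x_\ell$ ensures the relevant syzygy is non-trivial and not a trivial Koszul one.

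The main obstacle I anticipate is this last combinatorial bookkeeping: showing that the stated pair condition is both necessary and sufficient for the first syzygies of $I_1\cap I_2$ to stay on the linear $(d+2)$-strand, rather than producing a non-linear minimal generator whose pullback under the Mayer-Vietoris connecting map would give an off-strand Betti number for $I$. Executing this requires a careful case analysis of how distinct pairs $(w_1,w_2)$ interact under the syzygy differential and matching the lex-positions of $x_1 w_1/x_{\min(w_1)}$ and $x_1 w_2/x_\ell$ inside the lexsegment $L(u,v)$.
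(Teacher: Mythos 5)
First, note that the paper does not prove Theorem \ref{linearcomp} at all: it is quoted verbatim from \cite{BS} and \cite{BEOS} and used as a black box, so there is no in-paper argument to compare yours against; I can only assess your proposal on its own terms. The skeleton you choose --- splitting $I=I_1+I_2$ according to divisibility by $x_1$ and running the long exact $\Tor$ sequence on $0\to I_1\cap I_2\to I_1\oplus I_2\to I\to 0$ --- is indeed the standard route to such statements in the tradition of \cite{AHH}, so the overall shape is reasonable. But several of your intermediate identifications are wrong. A final squarefree lexsegment is \emph{not} squarefree strongly stable: $x_2x_3\in L^f(x_1x_4)$ while $x_1x_3=x_1(x_2x_3)/x_2\notin L^f(x_1x_4)$; final lexsegments have linear resolutions for a different reason (linear quotients). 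You also have the two pieces swapped: since $x_1\mid u$ and $x_1\nmid v$, the part $I_2$ generated by the generators not divisible by $x_1$ is the \emph{initial} lexsegment $L^i(v)$ in $K[x_2,\dots,x_n]$ (this one is strongly stable), whereas $I_1=x_1J$ with $J=L^f(x_{i_2}\cdots x_{i_d})$ a \emph{final} lexsegment; the ``completely lexsegment'' hypothesis plays no role in making $J$ strongly stable. Finally, your description of the degree-$(d+1)$ generators of $I_1\cap I_2$ is off: a degree-$(d+1)$ element of $I_1\cap I_2$ is an lcm $x_1w''$ with $w''\geq_{\lex}v$ a generator of $I_2$ and $x_1w''/x_t\leq_{\lex}u$ for some $t$, i.e.\ it comes from a monomial \emph{inside} the lexsegment, while the set $B$ of the statement consists of monomials $w<_{\lex}v$ lying \emph{outside} it. So the bridge you build between $B$ and the generators of $I_1\cap I_2$ does not hold as written.

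More importantly, even granting the setup, the proposal stops exactly where the theorem begins. The whole content of the statement is the equivalence between the pair condition on $B\times B$ (existence of the index $\ell$ with $\min(w_1)\leq\ell<\max(w_2)$, $x_\ell\mid w_2$, $x_1w_2/x_\ell\leq u$, $w_1/x_{\min(w_1)}\neq w_2/x_\ell$) and the vanishing of the off-linear-strand Betti numbers of $I_1\cap I_2$. You acknowledge this yourself as ``the main obstacle'' and ``combinatorial bookkeeping,'' but you do not carry it out in either direction: there is no argument that the condition forces every first syzygy of $I_1\cap I_2$ onto the linear strand, and no construction of a nonlinear minimal syzygy when the condition fails. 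Asserting that the condition ``translates exactly'' into linearity is a restatement of the theorem, not a proof of it. As it stands the proposal is a plausible strategy sketch with incorrect intermediate claims and the decisive step missing.
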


We now consider the particular settings which we are interested in. 

Let $u=x_1x_i$ and $v=x_jx_k$ with $i\geq 3$ and $2\leq j\leq n-2.$ 
According to Theorem \ref{caractcomp} we get the following characterization of the completely lexsegment edge ideals.

\begin{Corollary}
Let $u,v$ be as above and $I=(L(u,v)).$ Then $I$ is a completely lexsegment edge ideal if and only if $j\geq i-2.$
\end{Corollary}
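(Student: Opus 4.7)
The plan is to apply Theorem~\ref{caractcomp} in degree $d=2$ and unpack its condition (b) into a numerical inequality on $i,j,k$. First I would restrict attention to the relevant monomials: any squarefree $w$ of degree $2$ with $w <_{\lex} v$ cannot be divisible by $x_1$, since $v = x_j x_k$ has $j \geq 2$ and therefore $x_1 x_b >_{\lex} v$ for every $b$. Hence it suffices to consider $w = x_a x_b$ with $2 \leq a < b \leq n$ satisfying $w <_{\lex} v$.

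For such a $w$, the only indices $\ell > 1$ with $x_\ell \mid w$ are $\ell = a$ and $\ell = b$, giving $x_1 w/x_\ell = x_1 x_b$ or $x_1 x_a$ respectively. These are $\leq_{\lex} u = x_1 x_i$ iff $b \geq i$ or $a \geq i$, and since $a < b$ both conditions reduce to $b \geq i$. Thus condition (b) of Theorem~\ref{caractcomp} amounts to saying that \emph{no} pair $2 \leq a < b \leq i-1$ gives a monomial $x_a x_b <_{\lex} v$. Among all such pairs, the lex-smallest monomial is $x_{i-2} x_{i-1}$ (such a pair exists iff $i \geq 4$; when $i = 3$ the set is empty and condition (b) holds vacuously), so the characterization reduces to the single inequality $x_{i-2} x_{i-1} \geq_{\lex} v$.

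It remains to compare $x_{i-2} x_{i-1}$ with $v = x_j x_k$. Since $k > j$, one has $x_{i-2} x_{i-1} \geq_{\lex} x_j x_k$ iff either $i-2 < j$, or $i-2 = j$ with $i-1 \leq k$; the latter is automatic from $k > j = i-2$, so both cases collapse to $j \geq i-2$. The vacuous case $i = 3$ fits in since then $j \geq 2 > i-2$ is automatic, so the final equivalence is $j \geq i-2$ as claimed. The argument is essentially bookkeeping from Theorem~\ref{caractcomp}; the only subtlety is keeping the direction of the lex order straight and properly dispensing with the edge cases ($w$ divisible by $x_1$, and $i = 3$).
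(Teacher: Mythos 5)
Your proof is correct and follows essentially the same route as the paper: both reduce condition (b) of Theorem~\ref{caractcomp} in degree $2$ to a single extremal lex comparison, the paper testing the witness $w=x_{j+1}x_{j+2}$ and you testing the lex-smallest dangerous monomial $x_{i-2}x_{i-1}$, which amount to the same inequality $j\geq i-2$. Your version is more explicit about the edge cases ($w$ divisible by $x_1$, and $i=3$), which the paper leaves implicit.
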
 

\begin{proof}
For $w=x_{j+1}x_{j+2}$ we see that $x_1w/x_{j+1}\leq u$ if and only if $j+2\geq i.$
\end{proof}

Next we apply Theorem \ref{linearcomp} and get the following

\begin{Corollary}\label{cor1}
Let $u,v$ be as above and $I=(L(u,v))$ a completely lexsegment edge ideal, that is, $j\geq i-2.$ Then $I$ has a linear resolution if and 
only if $i\leq j+1$ or $i=j+2$ and $v=x_jx_n.$
\end{Corollary}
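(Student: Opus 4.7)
The plan is to apply Theorem \ref{linearcomp} directly with $d=2$. Since $j\leq n-2$ forces $v\neq x_{n-1}x_n$, the ideal $I$ is not a final lexsegment, so one must verify the combinatorial condition on the set $B$ stated in that theorem.

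First I would describe $B$ explicitly. For a degree-$2$ squarefree monomial $w=x_a x_b$ with $a<b$, the condition $x_1 w/x_{\max(w)}>u$ is equivalent to $a<i$, while $w<_{\lex}v$ forces $a\geq j$. The completeness hypothesis $j\geq i-2$ then leaves at most two possible values for $a$: concretely, $B=\emptyset$ if $j\geq i$; $B=\{x_j x_b\colon k<b\leq n\}$ if $j=i-1$; and $B=\{x_j x_b\colon k<b\leq n\}\cup\{x_{j+1}x_b\colon j+1<b\leq n\}$ if $j=i-2$.

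For the implication $(\Leftarrow)$, in the cases $i\leq j+1$ or ($i=j+2$ and $v=x_j x_n$), the set $B$ consists of monomials sharing a common smallest index $a$ (either $a=j$ throughout, or $a=j+1$ when $k=n$ makes the first half of $B$ empty). For any distinct $w_1,w_2\in B$, one checks that $\ell=a=\min(w_1)$ fulfills all requirements of Theorem \ref{linearcomp}, the distinctness $w_1/x_{\min(w_1)}\neq w_2/x_\ell$ following from $w_1\neq w_2$.

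For the converse, suppose $i=j+2$ and $k\leq n-1$, so both pieces of $B$ are non-empty. I would take $w_1=x_j x_{k+1}$ and $w_2=x_{j+1}x_{k+1}$; both lie in $B$, are distinct, and $x_1 w_1/x_j=x_1 x_{k+1}\leq u$ since $k+1\geq j+2$. The only $\ell$ with $j=\min(w_1)\leq \ell<k+1=\max(w_2)$ and $x_\ell\mid w_2$ is $\ell=j+1$, but then $w_1/x_j=x_{k+1}=w_2/x_{j+1}$ violates the required distinctness. Hence Theorem \ref{linearcomp} fails and $I$ has no linear resolution. The main obstacle is spotting this critical counterexample $(x_j x_{k+1},\,x_{j+1}x_{k+1})$, whose two elements differ only in the leading index while sharing the same second index, so that the distinctness condition becomes the unique obstruction.
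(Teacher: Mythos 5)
Your proof is correct and follows essentially the same route as the paper: both verify the criterion of Theorem \ref{linearcomp} by describing $B$ and, in the failing case $i=j+2$, $k<n$, exhibiting a bad pair whose two elements share the same largest index so that the distinctness condition fails (the paper uses $w_1=x_jx_n$, $w_2=x_{j+1}x_n$; your pair $x_jx_{k+1}$, $x_{j+1}x_{k+1}$ works just as well). The only cosmetic difference is that the paper handles $i\leq j+1$ via an explicit linear-quotients ordering, whereas you argue directly from the emptiness or single-row structure of $B$.
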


\begin{proof}
In the case $i\leq j+1$ one may apply Theorem \ref{linearcomp} or simply observe that if we order the minimal monomial 
generators of $I$ as 
\[
x_2x_3, x_2x_4,\ldots,x_2x_n,x_3x_4,\ldots, x_jx_k, x_1x_i, x_1x_{i+1},\ldots,x_1x_n,
\]
then we get linear quotients, hence $I$ has a linear resolution. 

Let $i=j+2.$ If $v=x_jx_n,$ then we have $B=\{x_{j+1}x_{j+2},\ldots,x_{j+1}x_n\}.$ In this case one may choose $\ell=j+1$ in order to verify the condition from Theorem 1.10. Let $v >_{\lex} x_jx_n.$ Then one may choose
$w_1=x_jx_n, w_2=x_{j+1}x_n\in B$. It follows that  $w_1,w_2$ do not satisfy the condition from Theorem \ref{linearcomp} since the only possible
choice for $\ell$ is $\ell=j+1,$ and, in this case, $w_1/x_j=w_2/x_{j+1}.$ 
\end{proof}

Next we consider lexsegment edge ideals which are not complete. To this aim we recall the following

\begin{Theorem}[\cite{BS},\cite{BEOS}]\label{linearnoncomp}
Let $I=L(u,v)$ be a squarefree lexsegment ideal determined by $u=x_1x_{i_2}\cdots x_{i_d}$ and $v=x_{j_1}\cdots x_{j_d}$, $j_1\geq 2.$ 
Assume that $I$ is not a completely squarefree lexsegment ideal. Then $I$ has a linear resolution if and only if $v$ is of the 
form $v=x_{\ell}x_{n-d+2}\cdots x_n$ for some $2\leq \ell < n-d+1.$
\end{Theorem}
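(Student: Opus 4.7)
My plan is to prove the two directions separately, using linear quotients for sufficiency and producing an explicit non-linear syzygy for necessity.

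For the sufficiency direction, assume $v=x_{\ell}x_{n-d+2}\cdots x_n$ with $2\leq \ell<n-d+1$. The crucial observation is that this form makes $v$ the lex-smallest squarefree degree-$d$ monomial whose minimum-index variable is $x_{\ell}$, since after fixing the smallest variable, the remaining $d-1$ must be the largest available. Consequently, the portion of $L(u,v)$ consisting of monomials whose smallest variable has index $\geq \ell$ forms a final squarefree lexsegment in $K[x_{\ell},\ldots,x_n]$, which is known to have a linear resolution. I would then list the minimal generators of $I$ in decreasing lex order and, for each generator $m$, exhibit one or two variables whose products with earlier generators recover $m$, verifying that the colon ideals $(m_1,\ldots,m_{k-1}):m_k$ are generated in degree one. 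Linear quotients among generators of the same degree $d$ then yield a linear resolution by the standard result.

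For the necessity direction, assume $I$ is not completely squarefree and that $v$ does not have the stated form. By Theorem \ref{caractcomp} there exists a squarefree degree-$d$ monomial $w<_{\lex} v$ such that for every $i>1$ with $x_i\mid w$ one has $x_1 w/x_i>_{\lex} u$. The plan is to combine this $w$ with the failure of the tail of $v$ to be maximal in order to exhibit a minimal first syzygy of $I$ in degree at least $d+2$. Writing $v=x_{j_1}\cdots x_{j_d}$, some $j_r$ with $r\geq 2$ must satisfy $j_r<n-d+r$; by comparing $v$ with a neighbouring generator obtained from replacing $x_{j_r}$ by a larger available variable, one obtains a Koszul-type relation whose linear lift would require a shadow monomial that is precisely ruled out by the non-complete hypothesis. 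This would produce $\beta_{1,d+s}(S/I)\neq 0$ for some $s\geq 1$, contradicting linearity.

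The main obstacle is the necessity direction, where a case analysis appears unavoidable: the offending non-linear syzygy depends on which entry $j_r$ of $v$ first fails to attain its extremal value $n-d+r$ and on the interaction with the entries of $u=x_1x_{i_2}\cdots x_{i_d}$. A useful auxiliary step is to examine the squarefree shadow of $L(u,v)$; the non-complete hypothesis is exactly the assertion that this shadow is not a lexsegment, and the strategy is to show that this defect propagates to a higher homological degree unless $v$ takes the form $x_{\ell}x_{n-d+2}\cdots x_n$, the unique configuration in which every missing shadow monomial is absorbed as an $x_1$-multiple and thus contributes only linear syzygies.
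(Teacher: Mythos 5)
This theorem is one the paper does not prove at all: it is quoted from \cite{BS} and \cite{BEOS}, so there is no internal argument to compare yours against. Judged on its own terms, your outline has two concrete gaps. First, in the sufficiency direction the ordering you specify does not work: listing the generators ``in decreasing lex order'' puts the monomials divisible by $x_1$ first, and already for $d=2$, $n=6$, $u=x_1x_5$, $v=x_2x_6$ the order $x_1x_5,\,x_1x_6,\,x_2x_3,\dots$ gives $\bigl((x_1x_5,x_1x_6):x_2x_3\bigr)=(x_1x_5,x_1x_6)$, which is not generated in degree one. Linear quotients can be arranged for these ideals, but only for a different order (initial-lexsegment part first, then the $x_1$-multiples), and verifying the colon ideals for general $d$ is exactly the content of the theorem, not a routine check. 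Your structural claim that the monomials of $L(u,v)$ with smallest index $\geq\ell$ form a final lexsegment in $K[x_\ell,\dots,x_n]$ is also off: since $v=x_\ell x_{n-d+2}\cdots x_n$ is the lex-smallest monomial with minimal index $\ell$, every $w\in L(u,v)$ not divisible by $x_1$ has $\min(w)\leq\ell$, and the relevant subfamily is an \emph{initial} lexsegment in $K[x_2,\dots,x_n]$.

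The necessity direction is where the real missing idea lies. You never construct the alleged nonlinear minimal syzygy; you only assert that a Koszul relation between $v$ and a neighbouring generator ``would require a shadow monomial that is ruled out.'' Non-completeness is a statement about which degree-$(d+1)$ monomials lie in the shadow of $L(u,v)$, and by itself it says nothing about $\Tor_1(I,K)$; one must exhibit a cycle and prove it is not a combination of linear syzygies, which is precisely the case analysis you defer. The Betti number you invoke is also the wrong one: $\beta_{1,d+s}(S/I)$ records minimal generators of $I$ and vanishes for $s\geq 1$ for trivial reasons; to contradict linearity you need $\beta_{p+1,d+p+s}(S/I)\neq 0$ for some $p\geq 1$, $s\geq 1$. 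The proofs in \cite{BS} and \cite{BEOS} avoid this by decomposing $I=J+J'$ into the $x_1$-part and the initial lexsegment part (both with linear resolutions, as in the paper's regularity lemma) and analysing $J\cap J'$ through the sequence $0\to J\cap J'\to J\oplus J'\to I\to 0$: linearity of $I$ is equivalent to $J\cap J'$ being generated in degree $d+1$ with a linear resolution, and the extremal form of $v$ is exactly what makes this intersection behave. Without either that reduction or an explicit nonvanishing homology computation (e.g.\ via Hochster's formula), your sketch does not yet constitute a proof.
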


Applying the above theorem to our particular setting we get the following

\begin{Corollary}\label{cor2}
Let $I=(L(u,v))$ be a lexsegment edge ideal, where $u=x_1x_i, i\geq 3,$ and $v=x_jx_k,$ $j < i-2$. Then $I$ has a linear resolution
if and only if $v=x_jx_n.$
\end{Corollary}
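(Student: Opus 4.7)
The plan is to obtain this corollary as a direct specialization of Theorem \ref{linearnoncomp} to degree $d=2$, once I verify that the hypothesis of that theorem (that $I$ is not a completely lexsegment ideal) is met.

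First I would observe that the preceding corollary characterizing completely lexsegment edge ideals gives the equivalence: $I = (L(u,v))$ with $u = x_1 x_i$ and $v = x_j x_k$ is completely lexsegment if and only if $j \geq i - 2$. Under the present hypothesis $j < i - 2$, this fails, so $I$ is not a completely squarefree lexsegment ideal and Theorem \ref{linearnoncomp} applies.

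Next I would specialize the conclusion of Theorem \ref{linearnoncomp} to $d = 2$. The criterion there reads $v = x_\ell x_{n - d + 2} \cdots x_n$ for some $2 \leq \ell < n - d + 1$, which for $d = 2$ collapses to $v = x_\ell x_n$ with $2 \leq \ell \leq n - 2$. Combined with the standing assumption $v = x_j x_k$, $j \geq 2$, and the fact that we are already excluding the final-lexsegment case $v = x_{n-1} x_n$ (so $j \leq n - 2$), this condition is equivalent to $v = x_j x_n$, i.e.\ $k = n$. This yields both directions of the corollary simultaneously.

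There is no real obstacle here: the argument is a short bookkeeping step on top of the two cited theorems. The only point to be careful about is checking that the range constraint $2 \leq \ell < n - 1$ from Theorem \ref{linearnoncomp} is automatically satisfied by $\ell = j$ under the standing hypotheses of the section, which is immediate from $2 \leq j \leq n - 2$.
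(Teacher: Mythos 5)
Your proposal is correct and follows exactly the route the paper intends: the paper derives Corollary \ref{cor2} by directly specializing Theorem \ref{linearnoncomp} to $d=2$, with the non-completeness hypothesis guaranteed by the preceding characterization ($I$ completely lexsegment iff $j\geq i-2$). Your write-up merely makes explicit the bookkeeping (that $n-d+2=n$ and $2\leq \ell<n-1$ reduce the criterion to $k=n$) that the paper leaves to the reader.
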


By using the above results we can compute the regularity of the lexsegment edge ideals.

\begin{Proposition}\label{propreg}
Let $I=(L(u,v))$ be a lexsegment edge ideal where $u=x_1x_i, v=x_jx_k, j\geq 2.$ Then
\[
\reg{I}=\left\{ 
\begin{array}{ll}
	3, & \text{ if } i\geq j+2 \text{ and } x_n\not| v \\
	2, & { otherwise.}
\end{array}\right.
\]
\end{Proposition}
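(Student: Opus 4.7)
The plan is to combine the preceding Lemma, which bounds $\reg(I)\in\{2,3\}$, with the characterizations of linear resolution obtained in Corollaries \ref{cor1} and \ref{cor2}. Since $I$ is generated in degree $2$, one has $\reg(I)=2$ precisely when $I$ has a linear resolution, and $\reg(I)=3$ otherwise. Thus the whole task reduces to translating the linear-resolution criteria into the dichotomy stated in the proposition.

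I would split the argument according to whether $I$ is completely lexsegment or not. In the completely lexsegment case, which by the Corollary after Theorem \ref{caractcomp} amounts to $j\ge i-2$, Corollary \ref{cor1} tells us that $I$ has a linear resolution if and only if either $i\le j+1$ or $i=j+2$ and $v=x_jx_n$. In the non-completely lexsegment case $j<i-2$, Corollary \ref{cor2} states that $I$ has a linear resolution if and only if $v=x_jx_n$. Merging these two descriptions, $I$ has a linear resolution exactly when $i\le j+1$, or when $i\ge j+2$ and $x_n\mid v$. Taking the complementary condition, $I$ fails to have a linear resolution if and only if $i\ge j+2$ and $x_n\nmid v$, which is precisely the case in which the proposition asserts $\reg(I)=3$.

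There is no substantial obstacle here; the result is a bookkeeping synthesis of the linear-resolution theorems cited just above, once one observes that the lemma pins $\reg(I)$ down to the two values $2$ and $3$. The only mild care needed is to check that the two subcases of the completely lexsegment situation ($i\le j+1$ versus $i=j+2$) merge cleanly with the non-complete case ($i\ge j+3$) into the single condition "$i\ge j+2$ and $x_n\mid v$" for a linear resolution when $i\ge j+2$, but this is immediate. No further local cohomology or Betti number computations are required.
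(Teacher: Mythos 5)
Your proposal is correct and follows exactly the paper's route: the paper's proof consists of the single sentence that the result ``follows immediately from Corollaries \ref{cor1} and \ref{cor2},'' and your write-up simply makes explicit the same bookkeeping (reduction to the linear-resolution dichotomy via the lemma that $\reg(I)\in\{2,3\}$, then merging the complete case $j\ge i-2$ with the non-complete case $j<i-2$). Nothing further is needed.
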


\begin{proof}
The proof follows immediately from Corollaries \ref{cor1} and \ref{cor2}.
\end{proof}

\section{Arithmetical rank of lexsegment edge ideals}
\label{aralexideals}

In this section we aim to prove Theorem \ref{aralei} on the arithmetical rank of lexsegment edge ideals. A useful tool will be 
Schmitt-Vogel Lemma (see \cite{SV}).

\begin{Lemma}\cite{SV}\label{svlemma}
Let $I\subset S$ be a squarefree monomial and $A_1,\ldots, A_r$ be some subsets of the set of monomials of $I.$ Suppose that the 
following conditions hold:
\begin{itemize}
	\item [(SV1)] $|A_1|=1$ and $A_i$ is a  finite set for any $2\leq i\leq r;$
	\item [(SV2)] The union of all the sets $A_i, i=\overline{1,r},$ contains the set of the minimal monomial generators of $I;$
	\item [(SV3)] For any $i\geq 2$ and for any two different monomials $m_1,m_2\in A_i$ there exists $j<i$ and a monomial 
	$m^{\prime}\in A_j$ such that $m^{\prime}|m_1m_2.$
\end{itemize}
Let  $g_i=\sum_{m_i\in A_i}m_i$ for $1\leq i\leq r.$ Then $\sqrt{(g_1,\ldots,g_r)}=I.$ In particular, $\ara(I)\leq r.$  
\end{Lemma}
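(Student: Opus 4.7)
The plan is to establish both inclusions in $\sqrt{(g_1,\ldots,g_r)}=I$. The inclusion $\sqrt{(g_1,\ldots,g_r)}\subseteq I$ is immediate: every $g_i$ is a sum of monomials belonging to $I$, so $(g_1,\ldots,g_r)\subseteq I$, and since $I$ is a squarefree monomial ideal one has $\sqrt{I}=I$, whence $\sqrt{(g_1,\ldots,g_r)}\subseteq I$. The final assertion $\ara(I)\le r$ is then just the definition of arithmetical rank applied to $g_1,\ldots,g_r$.

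For the reverse inclusion, by (SV2) it suffices to check that every monomial $m\in\bigcup_{k=1}^{r}A_k$ lies in $\sqrt{(g_1,\ldots,g_r)}$, since this union contains all minimal generators of $I$. I would prove the slightly stronger assertion that $m\in A_i$ implies $m\in\sqrt{(g_1,\ldots,g_i)}$, by induction on $i$. The base case $i=1$ is immediate from (SV1): $A_1=\{m_1\}$ and $g_1=m_1$, so $m_1\in(g_1)$.

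For the inductive step, fix $i\ge 2$, assume the assertion for all smaller indices, and take $m\in A_i$. The key identity is
\[
m\cdot g_i \;=\; m^2+\sum_{m'\in A_i,\ m'\ne m} m\,m'.
\]
For each cross term, apply (SV3): there exist $j<i$ and $m''\in A_j$ with $m''\mid m\,m'$, so $m\,m'\in(m'')$. By the inductive hypothesis $m''\in\sqrt{(g_1,\ldots,g_j)}\subseteq\sqrt{(g_1,\ldots,g_{i-1})}$, and hence each cross term belongs to $\sqrt{(g_1,\ldots,g_{i-1})}$. Rearranging the identity gives
\[
m^2 \;=\; m\cdot g_i-\sum_{m'\ne m} m\,m'\;\in\;(g_i)+\sqrt{(g_1,\ldots,g_{i-1})}\;\subseteq\;\sqrt{(g_1,\ldots,g_i)},
\]
so $m\in\sqrt{(g_1,\ldots,g_i)}$ and the induction closes. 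The case $A_i=\{m\}$ is covered by the same computation with an empty sum.

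The principal technical point is engineering the inductive step so that every mixed product $m\,m'$ arising in $m\cdot g_i$ is absorbed by the radical of the previously constructed $g_1,\ldots,g_{i-1}$; this is exactly the role of condition (SV3), and it is the only place the divisibility hypothesis enters. Condition (SV1) supplies the base case, while (SV2) guarantees that controlling elements of $\bigcup_k A_k$ is enough to control all minimal monomial generators of $I$.
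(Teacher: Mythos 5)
Your proof is correct. The paper does not prove this lemma at all---it is quoted as a known result of Schmitt and Vogel---and your argument (induction on $i$ using the identity $m\,g_i=m^2+\sum_{m'\in A_i,\,m'\ne m}m\,m'$, with (SV3) absorbing the cross terms into $\sqrt{(g_1,\ldots,g_{i-1})}$) is exactly the standard proof from the cited source.
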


\begin{Theorem}\label{aralei}
Let $I=(L(u,v))$ be a lexsegment edge ideal. Then 
\[
\ara(I)=\projdim_S(S/I).
\]
\end{Theorem}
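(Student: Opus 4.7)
The inequality $\projdim_S(S/I) \leq \ara(I)$ is already given by (\ref{equstar}), so the only thing to do is to produce, in each of the cases identified in Section \ref{invariants}, at most $\projdim_S(S/I)$ polynomials whose radical is $I$. The case split is dictated by Proposition \ref{depth1}, Lemma \ref{depth2} and Lemma \ref{depth3}, and falls into four mutually exclusive subcases depending on the relative position of $u=x_1x_i$ and $v=x_jx_k$.

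Two of the four subcases are immediate. If $x_{i-1}x_n \geq_{\lex} v$, then by Corollary \ref{pd1} we have $\projdim_S(S/I) = n-1$; since $\indeg(I)=2$, the right-hand inequality in (\ref{equstar}) yields $\ara(I) \leq n - \indeg(I)+1 = n-1 = \projdim_S(S/I)$. If instead $v = x_2x_k$ with $k<i$, then the argument in the proof of Lemma \ref{depth3} shows that the minimal monomial generators of $I$ enjoy the property that each contains a variable appearing in no other generator, so that the Taylor resolution is minimal and $\projdim_S(S/I) = \mu(I) = n+k-i-1$; the trivial bound $\ara(I)\leq \mu(I)$ then closes this subcase.

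The substantial work lies in the remaining subcases, namely $v >_{\lex} x_{i-1}x_n$ together with either $j\geq 3$ (Lemma \ref{depth2}) or $v=x_2x_k$ with $k\geq i$ (Lemma \ref{depth3}). In both subcases $\depth(S/I)=2$ and hence $\projdim_S(S/I)=n-2$, and I would invoke Schmitt-Vogel's Lemma \ref{svlemma} to produce $n-2$ polynomials. The construction I have in mind begins with the singleton $A_1=\{x_2x_i\}$ and then walks diagonally through the lexsegment: for $t=2,\ldots,k-i+1$, take $A_t = \{x_1x_{i+t-2},\, x_2x_{i+t-1}\}$, whose pair product $x_1x_2x_{i+t-2}x_{i+t-1}$ is divisible by $x_2x_{i+t-2} \in A_{t-1}$, thus verifying (SV3). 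The remaining $x_1$-generators $x_1x_k,\ldots,x_1x_n$ and $x_2$-generators $x_2x_3,\ldots,x_2x_{i-1}$ are then placed one per set, with a single "closing" pair (such as $\{x_1x_k, x_2x_3\}$, whose SV3 divisor is $x_2x_k$) absorbing the mismatch between generator count and set count. In the case $j\geq 3$ the generator $v=x_jx_k$ and the other $x_a x_b$ with $3\leq a\leq j$ are placed in a middle set as part of a triple together with the appropriate $x_1x_l$ and $x_2x_{l'}$, whose pairwise products are all divisible by $x_1x_i$, $x_2x_i$ or $x_2x_{i+1}$ already sitting in $A_1$ or $A_2$.

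The main obstacle is (SV3): pairs of the form $(x_1x_l, x_1x_{l'})$ or $(x_2x_l, x_2x_{l'})$ produce $x_1^2 x_l x_{l'}$ or $x_2^2 x_l x_{l'}$, whose only generator-of-$I$ divisors are the pair members themselves (since $x_l x_{l'} \not\in I$ once $l,l' \geq i$ or once $l,l' \geq 3$ with $j=2$). Hence such "parallel" pairs cannot coexist in one $A_t$, which forces a careful staircase placement of the $x_1$- and $x_2$-edges and a delicate bookkeeping that genuinely depends on the relative sizes of $i,j,k$ and $n$. Once the sets $A_1,\ldots,A_{n-2}$ satisfying (SV1)--(SV3) are constructed, Lemma \ref{svlemma} gives $\ara(I)\leq n-2 = \projdim_S(S/I)$, and combining this with the four subcases completes the proof.
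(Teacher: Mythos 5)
Your overall strategy coincides with the paper's: the lower bound comes from (\ref{equstar}), the case $x_{i-1}x_n\geq_{\lex}v$ is closed by the bound $\ara(I)\leq n-\indeg(I)+1=n-1$, the case $v=x_2x_k$ with $k<i$ by the minimality of the Taylor resolution, and the remaining cases are to be handled by Schmitt--Vogel with $n-2$ sets. But in those remaining cases the entire content of the proof \emph{is} the explicit construction of the sets $A_1,\ldots,A_{n-2}$, and what you offer is a sketch that both contains an error and misses the one genuinely non-obvious idea. The error: you claim that when $j\geq 3$ a generator $x_ax_b$ with $3\leq a\leq j$ can be placed in a triple with some $x_1x_l$ and $x_2x_{l'}$ because ``the pairwise products are all divisible by $x_1x_i$, $x_2x_i$ or $x_2x_{i+1}$.'' This is false in general: $x_ax_b\cdot x_1x_l$ is divisible by $x_1x_i$ only if $i\in\{a,b,l\}$, and a product of two such middle generators is not divisible by $x_1$ or $x_2$ at all. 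Verifying (SV3) here really does require arranging \emph{all} the generators into a triangular tableau and taking anti-diagonals, with the divisor $m'$ found at the intersection of a row and a column rather than being one of three fixed monomials.

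The missing idea is that one must be allowed to place monomials of $I$ that are \emph{not} minimal generators into the sets $A_t$ (which Lemma \ref{svlemma} permits), and in several subcases this is unavoidable. Concretely, take $j=2$, $i=3$, $v=x_2x_k$ with $3\leq k<n$: the generators are $x_1x_3,\ldots,x_1x_n$ and $x_2x_3,\ldots,x_2x_k$. Your staircase pairs each $x_2x_l$ with one $x_1x_{l'}$, but that exhausts the $x_2$-generators while $n-k+1$ generators $x_1x_k,\ldots,x_1x_n$ remain; as you yourself observe, no two of these can share a set, so you end with $n-1$ sets, not $n-2$, and there is no ``closing pair'' available. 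The paper escapes by inserting the non-minimal monomials $x_1x_2x_{k+1},\ldots,x_1x_2x_{n-1}\in I$ as partners (and similarly inserts $x_1x_jx_{k+1},\ldots,x_1x_jx_n$ in the subcase $j\geq 3$, $v>_{\lex}x_{i-1}x_i$). Without this device your count cannot close, so the proposal as written does not prove $\ara(I)\leq n-2$ in all the required cases; the ``delicate bookkeeping'' you defer is precisely where the theorem lives.
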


\begin{proof} Let $u=x_1x_i$ and $v=x_jx_k$ such that $u\geq_{\lex}v.$
In the first place we observe that the statement is obviously true if $j=1$ since, for instance, $I$ is isomorphic as an $S$-module 
to the ideal generated by the variables $x_i,\ldots,x_k.$  Hence we may assume that $j\geq 2.$ We will consider separately the case 
$j=2$. 

Let $j\geq 3.$ By Corollary \ref{pd1}, we have $\projdim_S(S/I)=n-1$ if and only if $x_{i-1}x_n\geq_{\lex}v.$ If this is the case, then,
by using inequalities (\ref{equstar}), it follows that  
 $n-1=\projdim_S(S/I)\leq \ara(I)\leq n-1,$ and, consequently, the required equality. 
 
 Now let $v>_{\lex}x_{i-1}x_n$ in the same hypothesis on $j,$ namely $j\geq 3.$ We have $\projdim_S(S/I)=n-2.$ We are going to 
 distinguish two cases to study. In both cases we show that $\ara(I)=n-2=\projdim_S(S/I)$ by using Schmitt-Vogel Lemma.
 
 Case (1). Let $i=4$ or $x_{i-1}x_i\geq_{\lex}v>_{\lex}x_{i-1}x_n.$ In particular, by our assumption $j\geq 3,$  we 
 have  $i\geq 4.$ We display the minimal monomial generators of $I$ in an upper triangular tableau as follows. In the first row we put the generators 
 divisible by $x_2$ ordered decreasingly with respect to the lexicographic order except the monomial $x_2x_n$ which is intercalated
between the monomials $x_2x_{i-1}$ and $x_2x_i.$ In the same way we order on the second row the monomials divisible by $x_3,$ 
intercalating the monomial $x_3x_n$ between $x_3x_{i-1}$ and $x_3x_i.$ We continue in this way up to the row containing the monomials divisible by $x_{i-2}.$ On the next row we put the monomials $x_1x_n,x_1x_i,x_1x_{i+1},\ldots,x_1x_{n-1},$ and, finally, on the last row, we put the remaining generators, namely $x_{i-1}x_i,\ldots,v.$ Then our tableau looks as follows.
\[
\begin{array}{lllllllll}
	x_2x_3 & x_2x_4 & \ldots & x_2x_{i-1} & \underline{x_2x_n} & x_2x_i & \ldots & x_2x_{n-2} & x_2x_{n-1}\\
	       & x_3x_4 & \ldots & x_3x_{i-1} & \underline{x_3x_n} & x_3x_i & \ldots & x_3x_{n-2} & x_3x_{n-1}\\   
	       &        &        & \vdots     &     \vdots         &\vdots  &        &   \vdots   & \vdots     \\
	       &        &        & x_{i-2}x_{i-1}&\underline{x_{i-2}x_n} & x_{i-2}x_i & \ldots & x_{i-2}x_{n-2}& x_{i-2}x_{n-1}\\
	       &        &        &               &\underline{x_1x_n} &  \underline{x_1x_i} & \ldots & \underline{x_1x_{n-2}}&
	       \underline{x_1x_{n-1}}\\
	       &        &        &               &                   & x_{i-1}x_i & \ldots \ v
\end{array}
\]

Next we define the sets $A_1,A_2,\ldots,A_{n-2}$ in the following way. In the first set we put the monomial from the left-up corner 
of the tableau. In the second set we put the two monomials from the left up parallel to the diagonal of the triangular tableau. In the third set we collect the three monomials from the next parallel to the diagonal, and so on. Explicitly, the sets are the following ones.
\[
\begin{array}{ll}
	A_1 = & \{x_2x_{n-1}\},\\
	A_2 = & \{x_2x_{n-2}, x_3x_{n-1}\},\\
	A_3 = & \{x_2x_{n-3},x_{3}x_{n-2},x_4x_{n-1}\}, \\
	\vdots &  \\
	A_{n-i+1}= & \{x_2x_n, x_3x_i,x_4x_{i+1},\ldots \},\\
	\vdots &  \\
	A_{n-2}= & \{x_2x_3,x_3x_4,\ldots, x_{i-2}x_{i-1},x_1x_n,x_{i-1}x_i\}.
\end{array}.
\]
One may easy check that the sets $A_1,\ldots,A_{n-2}$ verify all the conditions from Lemma \ref{svlemma}. We give only a brief 
explanation concerning the third condition. Indeed if one picks up two different monomials in the set $A_j$ for some $j\geq 2,$ let us 
say $m_1$ from the $r$-th row and $m_2$ from the $s$-th row of the tableau with $r<s,$ then the monomial $m^{\prime}$ at the 
intersection of the 
$r$-th row and the column of $m_2$ divides the product $m_1m_2$ and $m^{\prime}\in A_{\ell}$ for some $\ell < j.$

Case (2). Let $x_3x_4\geq_{\lex} v=x_jx_k>_{\lex}x_{i-1}x_i$ and $i\geq 5.$ Then we construct a similar triangular tableau to that one 
from the previous case, but we preserve the decreasing lexicographic order in each row. In this tableau we will add the underlined 
monomials in the $(j-1)$-th row.
\[
\begin{array}{llllllllll}
x_2x_3 & x_2x_4 & \ldots & x_2x_j & x_2x_{j+1} & \ldots & x_2x_k & x_2x_{k+1} & \ldots & x_2x_n\\
       &x_3x_4 & \ldots & x_3x_j & x_3x_{j+1} & \ldots & x_3x_k & x_3x_{k+1} & \ldots & x_3x_n\\
       &       &        & \vdots & \vdots     &        & \vdots &  \vdots    &        &  \vdots\\
       &       &        & x_{j-1}x_j & x_{j-1}x_{j+1} & \ldots & x_{j-1}x_k & x_{j-1}x_{k+1} & \ldots & x_{j-1}x_n\\   
       &       &        &            & x_jx_{j+1}& \ldots & x_jx_k=v & \underline{x_1x_jx_{k+1}}& \ldots & \underline{x_1x_jx_n}\\
       &       &        &            &           & x_1x_i &  &  \ldots  &  & x_1x_n     

\end{array}
\]
Note that in this case it is impossible to have $i=j+1$. Indeed, if $i=j+1,$ then, by our hypothesis we have 
$x_jx_k>_{\lex}x_jx_{j+1}$, which is impossible. 

One may easy check that the sets $A_1=\{x_2x_n\}, A_2=\{x_2x_{n-1}, x_3x_n\}, A_3=\{x_2x_{n-2},x_3x_{n-1},x_4x_n\},\ldots, 
A_{n-2}=\{x_2x_3,x_3x_4,\ldots,x_jx_{j+1}\}$ verify the conditions from Lemma \ref{svlemma}, thus $\ara(I)\leq n-2.$ Since we also have 
$\projdim_S(S/I)=n-2,$ we get that $\ara(I)=\projdim_S(S/I).$

To finish the proof, we only need to consider the case $j=2,$ that is, $u=x_1x_i$ and $v=x_2x_k$ for some $i$ and $k$ such that
$v>_{\lex}x_{i-1}x_n.$ Note that, in particular, we have $i-1\geq 2,$ that is, $i\geq 3.$

If $i>k$, then, as in the proof of Lemma \ref{depth3}, we obtain that the Taylor resolution of $I$ is minimal. This implies that $\projdim_S(S/I)=\mu(I)$, where 
$\mu(I)$ denotes the number of the minimal monomial generators of $I.$ Therefore, $\ara(I)=\mu(I)=\projdim_S(S/I).$

If $k\geq i,$ we show that $\ara(I)=\projdim_S(S/I)=n-2$ by using again Lemma \ref{svlemma}. In this case we put the generators of 
$I$ in a $2$-row tableau. 
\[
\begin{array}{llllllll}
	       &        & x_1x_i & \ldots & x_1x_k & x_1x_{k+1} & \ldots & x_1x_n\\
x_2x_3   & \ldots & x_2x_i & \ldots & x_2x_k &            &        &
\end{array}
\]
 If $i> 3,$ we add to the second row the monomials $x_1x_2x_{k+1},\ldots,x_1x_2x_n.$
 
 We get the tableau
\[
\begin{array}{llllllll}
	       &        & x_1x_i & \ldots & x_1x_k & x_1x_{k+1} & \ldots & x_1x_n\\
x_2x_3   & \ldots & x_2x_i & \ldots & x_2x_k &  \underline{x_1x_2x_{k+1}}& \ldots & \underline{x_1x_2x_n}\\
\end{array}
\]
and set
\[
A_1=\{x_1x_2x_n\}, A_2=\{x_1x_n,x_1x_2x_{n-1}\},A_3=\{x_1x_{n-1},x_1x_2x_{n-2}\}, \ldots
\]
\[
\ldots, A_{n-k}=\{x_1x_{k+2},x_1x_2x_{k+1}\},
A_{n-k+1}=\{x_1x_{k+1},x_2x_k\},\ldots, A_{n-2}=\{x_2x_3\}.
\]

If $i=3,$ then we add the monomials $x_1x_2x_{k+1},\ldots, x_1x_2x_{n-1}$ to the initial tableau and get
\[
\begin{array}{llllllll}
x_1x_3	 &    x_1x_4     & \ldots & x_1x_k & x_1x_{k+1} & \ldots & x_1x_{n-1} & x_1x_n\\
x_2x_3   &    x_2x_4     & \ldots  & x_2x_k & \underline{x_1x_2x_{k+1}}& \ldots & \underline{x_1x_2x_{n-1}} & \\
\end{array}
\]
We set 
\[
A_1=\{x_1x_3\}, A_2=\{x_1x_4,x_2x_3\}, A_3=\{x_1x_5,x_2x_4\},\ldots, A_{k-2}=\{x_1x_k,x_2x_{k-1}\},
\]
\[
A_{k-1}=\{x_1x_{k+1},x_2x_k\},A_k=\{x_1x_{k+2},x_1x_2x_{k+1}\},\ldots,A_{n-2}=\{x_1x_n,x_1x_2x_{n-1}\}.
\]
In both cases, by using Lemma \ref{svlemma}, we get $\projdim_S(S/I)=n-2\leq \ara(I)\leq n-2,$ hence $\ara(I)=n-2=\projdim_S(S/I).$
\end{proof}

We recall that an ideal $I\subset S$ is called a \textit{set-theoretic complete intersection} if $\ara(I)=\height(I)$.   For squarefree monomial ideals we  $\ara(I)\geq 
\projdim_S(S/I),$ by using again (\ref{equstar}). If $\height(I)=\ara (I),$ we get 
\[
\height (I)\geq \projdim_S(S/I)=n-\depth (S/I)\geq n-\dim(S/I)=\height (I).
\] 
 Therefore, we derive the following implication for squarefree monomial ideals:
\begin{center}
set-theoretic complete intersection $\Rightarrow$ Cohen-Macaulay.
\end{center}

For  lexsegment edge ideals the converse is also true,  by Theorem \ref{aralei}.

\begin{Corollary}
Let $I$ be a lexsegment edge ideal. Then the following statements are equivalent:
\begin{itemize}
	\item [(a)] $I$ is Cohen-Macaulay.
	\item [(b)] $I$ is a set-theoretic complete intersection.
\end{itemize}
\end{Corollary}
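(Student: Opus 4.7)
The plan is to establish the two implications separately, leaning entirely on machinery already developed in the paper so that only a short chain of (in)equalities is needed.

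For the implication (b) $\Rightarrow$ (a), I would simply invoke the general fact derived in the paragraph immediately preceding the corollary: for any squarefree monomial ideal one has the chain $\height(I)\leq\projdim_S(S/I)=n-\depth(S/I)\leq n-\dim(S/I)=\height(I)$, so if $\ara(I)=\height(I)$ and $\ara(I)\geq\projdim_S(S/I)$, then equality holds throughout, which forces $\depth(S/I)=\dim(S/I)$, i.e.\ Cohen--Macaulayness. This direction holds in full generality for squarefree monomial ideals and does not use that $I$ is lexsegment.

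For the implication (a) $\Rightarrow$ (b), the key input is Theorem~\ref{aralei}. Assuming $I$ is Cohen--Macaulay, I would write
\[
\ara(I)=\projdim_S(S/I)=n-\depth(S/I)=n-\dim(S/I)=\height(I),
\]
where the first equality is Theorem~\ref{aralei}, the second is the Auslander--Buchsbaum formula applied to $S/I$ over the regular ring $S$, the third uses the Cohen--Macaulay hypothesis, and the last is just the definition of height for an ideal in a Cohen--Macaulay (in fact regular) ring. Thus $\ara(I)=\height(I)$, which is precisely the definition of a set-theoretic complete intersection.

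There is essentially no obstacle here: the entire corollary is a bookkeeping consequence of Theorem~\ref{aralei} together with the standard chain of inequalities $\height(I)\leq\projdim_S(S/I)\leq\ara(I)$ that the authors have just recalled. The only thing worth being explicit about is that the assumption $x_1\mid u$ made throughout Section~\ref{invariants} is harmless, since if $u=x_\ell x_q$ with $\ell\geq 2$ then $x_1,\dots,x_{\ell-1}$ is a regular sequence modulo $I$ and both Cohen--Macaulayness and the set-theoretic complete intersection property are preserved under passing to the quotient polynomial ring $K[x_\ell,\dots,x_n]$; so no generality is lost in reducing the corollary to the setting covered by the results of Sections~\ref{invariants} and~\ref{aralexideals}.
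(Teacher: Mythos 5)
Your proposal is correct and follows the same route the paper intends: the implication (b) $\Rightarrow$ (a) is exactly the chain of (in)equalities the authors record in the paragraph preceding the corollary, and (a) $\Rightarrow$ (b) is the intended one-line consequence of Theorem~\ref{aralei} combined with Auslander--Buchsbaum and the Cohen--Macaulay hypothesis. Your extra remark on reducing to the case $x_1\mid u$ is a harmless (and reasonable) piece of added care.
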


\section{Arithmetical rank of the Alexander dual of a lexsegment edge ideal}
\label{aradual}

As before, let $u=x_1x_i, v=x_jx_k$ be two squarefree monomials of degree $2$ such that $u\geq_{\lex} v$ and $I=(L(u,v))$ the lexsegment edge 
ideal generated by the set $L(u,v).$ 

Let $I^{\ast}$ be the Alexander dual ideal of $I$.
Then we have 
\begin{eqnarray*}
I^{\ast} &= &(x_1,x_{i})\cap(x_1,x_{i+1})\dots \cap (x_1,x_{n})
\cap(x_2,x_3)\cap(x_2,x_{4})\cap \dots \cap (x_2,x_{n})\\
&&\cap(x_3,x_4)\cap(x_3,x_{5})\cap \dots \cap (x_3,x_{n})\cap \dots
\cap(x_j,x_{j+1})\cap \dots \cap (x_j,x_{k}),
\end{eqnarray*}
which is an unmixed ideal of height two (see, e.g., \cite[Proposition 1.1]{T1}).
In this section we show the equality 
$\ara(I^{\ast})=\projdim_S(S/I^{\ast})$. 
Since we have $\projdim_S(S/I^{\ast})=\reg I$ \cite[Corollary 1.6]{T1}, by Proposition \ref{propreg} we have the following:

\begin{Proposition}
Let $I=(L(u,v))$ be a lexsegment edge ideal where $u=x_1x_i, v=x_jx_k, j\geq 2.$ Then
\[
\projdim_S(S/I^{\ast})=\left\{ 
\begin{array}{ll}
	3, & \text{ if } i\geq j+2 \text{ and } x_n\not| v \\
	2, & { otherwise.}
\end{array}\right.
\]
\end{Proposition}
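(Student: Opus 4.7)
The plan is essentially to combine two results stated right before the proposition: the Terai-type duality formula $\projdim_S(S/I^{\ast}) = \reg I$ from \cite[Corollary 1.6]{T1}, and the explicit formula for $\reg I$ given in Proposition \ref{propreg}. Since the proposition's conclusion is literally the right-hand side of the regularity formula with $\reg I$ replaced by $\projdim_S(S/I^{\ast})$, there is nothing new to prove beyond chaining these two identities.

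More concretely, I would begin by noting that $I^{\ast}$ is a squarefree monomial ideal (indeed, the intersection of monomial primes of height two written out explicitly in the excerpt), so Terai's equality applies. Then I would simply invoke Proposition \ref{propreg}, which distinguishes exactly the two cases ``$i \geq j+2$ and $x_n \nmid v$'' versus ``otherwise'', yielding $\reg I = 3$ or $\reg I = 2$ respectively. Substituting into Terai's formula gives the claim.

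There is no real obstacle here: the work has already been done in Section \ref{invariants}, where the regularity of a lexsegment edge ideal is determined via the characterization of squarefree lexsegment ideals with linear resolution (Theorems \ref{linearcomp} and \ref{linearnoncomp}, and Corollaries \ref{cor1} and \ref{cor2}). The only conceptual ingredient beyond what we have already established is the cited duality $\projdim_S(S/I^{\ast}) = \reg I$, which is used as a black box. Thus the proposed proof reduces to a one-line citation combining Proposition \ref{propreg} with \cite[Corollary 1.6]{T1}.
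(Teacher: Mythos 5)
Your proposal is correct and matches the paper exactly: the paper derives this Proposition in the single sentence preceding its statement, by combining the duality $\projdim_S(S/I^{\ast})=\reg I$ from \cite[Corollary 1.6]{T1} with the regularity formula of Proposition \ref{propreg}. Nothing further is needed.
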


Now we determine the arithmetical rank of the Alexander dual of a lexsegment edge ideal.

\begin{Theorem}\label{aradual}
Let $I=(L(u,v))$ be a lexsegment edge ideal. Then 
\[
\ara(I^{\ast})=\projdim_S(S/I^{\ast}).
\]
\end{Theorem}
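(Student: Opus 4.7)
The plan is as follows. By (\ref{equstar}) we already have $\projdim_S(S/I^\ast)\le\ara(I^\ast)$, so it suffices to produce the matching upper bound. The preceding proposition pins the target to either $2$ or $3$, and in each regime I would apply Schmitt--Vogel (Lemma~\ref{svlemma}) to construct an explicit system of the right length. The case $j=1$ is disposed of directly: a short intersection computation yields $I^\ast=(x_1,\,x_ix_{i+1}\cdots x_k)$, a complete intersection of height~$2$. From now on I restrict to $u=x_1x_i$ with $i\ge 3$ and $v=x_jx_k$ with $j\ge 2$.

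I would first make the minimal generators of $I^\ast$ explicit as the monomials $\prod_{a\in C}x_a$ indexed by the minimal vertex covers $C$ of the graph $G$ on $[n]$ whose edge set is $L(u,v)$. The shape of $G$ is transparent from that of $L(u,v)$: vertex $1$ is adjacent to $\{i,i+1,\ldots,n\}$; each vertex $b$ with $2\le b\le j-1$ is adjacent to every vertex of strictly larger index; vertex $j$ is adjacent to $\{j+1,\ldots,k\}$; and no edges lie inside $\{j+1,\ldots,n\}$. The minimal vertex covers of such a graph can be enumerated layer by layer, and it will be convenient to arrange them in a triangular-tableau pattern reminiscent of the proof of Theorem~\ref{aralei}.

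The argument then splits along the value of $\projdim_S(S/I^\ast)$. When $\projdim_S(S/I^\ast)=2$ (so $i\le j+1$ or $v=x_jx_n$), I would place one minimal cover $C''$ in $A_1$ and the remaining minimal covers in $A_2$. The crucial point is that (SV3) for this partition is equivalent to the purely combinatorial condition $C''\subseteq C\cup C'$ for every $C,C'\in A_2$, i.e.\ every vertex of $C''$ belongs to all but at most one cover in $A_2$; suitable choices for $C''$ (a small-index cover built out of $\{1,2,\ldots,j-1\}$ together with a single high-index vertex, or a tail-type cover, depending on the sub-case) satisfy it by direct inspection. When $\projdim_S(S/I^\ast)=3$ (so $i\ge j+2$ and $v\ne x_jx_n$), I would produce three sets $A_1,A_2,A_3$ with $|A_1|=1$ by partitioning the minimal covers according to whether $1\in C$, $j\in C$, or $C$ contains the whole tail $\{k+1,\ldots,n\}$, and verify (SV3) pairwise by the same divisibility criterion.

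The principal difficulty is combinatorial rather than conceptual: the shape of $G$ varies markedly with the regime ($j=2$ versus $j\ge 3$, and for $j=2$ with whether $i\le k$ or $i>k$, since in the latter case the vertices $1$ and $2$ have disjoint neighbourhoods and the minimal covers split into a product), so the Schmitt--Vogel partition must be tailored sub-case by sub-case, and (SV3) in each sub-case reduces to a monomial divisibility check carried out by hand. The main obstacle will be to organize the case analysis so that the same distinguished cover $C''$ can be reused across as many sub-cases as possible, thereby keeping the bookkeeping manageable.
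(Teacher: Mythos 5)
Your lower bound, the $j=1$ reduction, and the identification of the two regimes $\projdim_S(S/I^{\ast})\in\{2,3\}$ all agree with the paper. But your route for the upper bound --- a direct Schmitt--Vogel argument on the minimal monomial generators of $I^{\ast}$, i.e.\ on the minimal vertex covers of $G$ --- is not the paper's, and as written it has a genuine gap: the entire content of the argument is the verification of (SV1) and (SV3), and that verification is never carried out. In the $\projdim=2$ regime your scheme requires a \emph{single} minimal cover $C''$ with $C''\subseteq C\cup C'$ for \emph{every} pair of remaining minimal covers; this is a strong global condition on a family of covers whose size and shape vary with $i,j,k,n$, and ``by direct inspection'' is not a proof. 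In the $\projdim=3$ regime the situation is worse: the proposed classification of covers by ``$1\in C$'', ``$j\in C$'', or ``$C\supseteq\{k+1,\ldots,n\}$'' is not a partition (the cover $\{1,2,\ldots,j\}$ satisfies the first two conditions simultaneously, and a cover such as $\{2,\ldots,j-1\}\cup\{\text{tail}\}$-type covers can satisfy the second and third), and none of the three classes is a singleton in general, so (SV1) fails for the sets as described. Until a concrete $C''$ and a concrete three-set partition are exhibited and (SV3) is checked in each sub-case, the upper bound is not established.

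For contrast, the paper avoids the enumeration of minimal covers entirely. When $\projdim_S(S/I^{\ast})=2$ (i.e.\ $i\le j+1$ or $k=n$) the ideal $I^{\ast}$ is unmixed of height two with $S/I^{\ast}$ Cohen--Macaulay, and Kimura's theorem \cite{K} already says such ideals are set-theoretic complete intersections, so $\ara(I^{\ast})=2$ with no Schmitt--Vogel computation. When $i\ge j+2$ and $k\ne n$, the paper sets $J=(L(x_1x_i,x_{j-1}x_n))$, whose dual falls into the previous case, writes $J^{\ast}=\sqrt{(f_1,f_2)}$, and observes
\[
I^{\ast}=J^{\ast}\cap(x_j,x_{j+1}\cdots x_k)=\sqrt{\bigl(x_jf_1,\; x_jf_2+x_{j+1}\cdots x_kf_1,\; x_{j+1}\cdots x_kf_2\bigr)},
\]
the last equality being the standard quadratic-equation trick for intersecting two ideals that are each two-generated up to radical. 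This yields $\ara(I^{\ast})\le 3$ in one line. If you want to salvage your approach you would need either to prove the existence of the distinguished cover $C''$ in all sub-cases (essentially reproving a special case of Kimura's theorem by hand) and to repair the three-set partition, or to adopt the reduction $I^{\ast}=J^{\ast}\cap(x_j,x_{j+1}\cdots x_k)$, which is the idea your proposal is missing.
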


\begin{proof}
We may assume that $u=x_1x_i, v=x_jx_k$.
If $j=1$, then $I^{\ast}=(x_1,x_{i})\cap(x_1,x_{i+1}) \cap \dots \cap (x_1,x_{k})
 =(x_1,x_{i}x_{i+1}\dots x_{k})$ is a (set-theoretic) complete intersection.
Hence we may assume that $j \ge 2$.

Now we assume that  $i\leq j+1 \text{ or } k=n$.
Then we have  $\projdim_S(S/I^{\ast})=\height I^{\ast}=2$, and  $S/I^{\ast}$ is Cohen-Macaulay.
In this case $I^{\ast}$ is a set-theoretic complete intersection by Kimura \cite{K}.
Hence $\ara(I^{\ast})=\projdim_S(S/I^{\ast})=2$.

Next we assume that  $i\geq j+2 \text{ and } k\ne n$. 
Let $J^{\ast}$ be the Alexander dual ideal of $J=(L(x_1x_i, x_{j-1}x_n))$.
Then we have $\ara(J^{\ast})=\projdim_S(S/J^{\ast})=2$.
Hence there exist $f_1,f_2 \in S$ such that
$\sqrt{(f_1,f_2)}=J^{\ast}.$
Then we have
\begin{eqnarray*}
I^{\ast} &= &J^{\ast} \cap(x_j,x_{j+1})\cap(x_j,x_{j+2})\cap \dots \cap (x_j,x_{k})\\
&= &\sqrt{(f_1,f_2)} \cap(x_j,x_{j+1}x_{j+2}\dots x_{k})\\
&= & \sqrt{(f_1f_2)(x_j,x_{j+1}x_{j+2}\dots x_k)}\\
&= &\sqrt{(x_jf_1, x_jf_2, x_{j+1}x_{j+2}\dots x_{k}f_1, x_{j+1}x_{j+2}\dots x_{k}f_2)}\\
&= &\sqrt{(x_jf_1, x_jf_2+x_{j+1}x_{j+2}\dots x_{k}f_1, x_{j+1}x_{j+2}\dots x_{k}f_2)}.
\end{eqnarray*}
For the last equality we need only to justify the inclusion from the left part to the right part. This follows immediately if we notice 
that $x_jf_2$ and $x_{j+1}x_{j+2}\dots x_k f_1$ are solutions of the equation 
\[
t^2-(x_jf_2 + x_{j+1}x_{j+2}\dots x_k f_1)t+ x_jx_{j+1}x_{j+2}\dots x_k f_1f_2=0.
\]
We have $3 =\projdim_S(S/I^{\ast}) \le \ara(I^{\ast}) \le 3$.
Hence $\ara(I^{\ast})=\projdim_S(S/I^{\ast})=3$, as desired.
\end{proof}

\end{document}